\theoremstyle{plain}
\newtheorem{teo}{Theorem}[section]
\newtheorem{lema}[teo]{Lemma}
\newtheorem{prop}[teo]{Proposition}
\newtheorem{coro}[teo]{Corollary}
\theoremstyle{definition}
\newtheorem{defi}[teo]{Definition}
\newtheorem{ejem}[teo]{Example}
\newtheorem{nota}[teo]{Notation}
\newcommand{\ts}[1]{\normalfont{\textsf{#1}}}
\newcommand{\K}{\ts k}
\renewcommand{\a}{\alpha}
\renewcommand{\b}{\beta}
\newcommand{\X}{\mathbb X}
\newcommand{\za}{\alpha}
\newcommand{\zd}{\delta}
\newcommand{\ze}{\epsilon}
\newcommand{\zl}{\lambda}
\newcommand{\zs}{\sigma}
\newcommand{\ind}{\textup{ind}\,}
\newcommand{\mesh}{\mathcal{M}}
\newcommand{\ZZ}{\mathbb{Z}}
\newcommand{\edges}{E'} 
\newcommand{\pos}{\textup{pos}\,}
\newcommand{\level}{\textup{level}\,}
\title[ Repetitive cluster categories of type $D_n$ ]{Repetitive cluster  categories of type $D_n$}
\author[V. Gubitosi]{Viviana Gubitosi}
\address{Instituto de Matem\'{a}tica y Estad\'{\i}stica Rafael Laguardia, Facultad de Ingenier\'{\i}a - UdelaR, Montevideo, Uruguay, 11200 }
\email{gubitosi@fing.edu.uy}
\keywords{Repetitive cluster categories; cluster categories; geometric model }
\begin{document}
\maketitle

\begin{abstract}
 In this paper, we show that the repetitive  cluster category of type $D_n$,
defined as the orbit category $\mathcal{D}^b(\mathrm{mod}\K D_n)/(\tau^{-1}[1])^p$, is
equivalent to a category defined on a subset of tagged edges in a regular punctured polygon. This
generalizes the construction of Schiffler, \cite{S}, which we recover when $p=1$.
\end{abstract}

\section*{Introduction}

Cluster categories were introduced in \cite{BMRRT} and, independently,
in \cite{CCS} for type $A_n$, in order to model the combinatorics of the
cluster algebras of Fomin and Zelevinski \cite{FZ1,FZ2}.

Roughly speaking, given an hereditary finite dimensional algebra $\mathcal{H}$  over an  algebraically closed field, the
cluster category $\mathcal{C}_{\mathcal{H}}$ is obtained from the derived category $\mathcal{D}^b(\mathcal{H})$ by identifying the shift functor $[1]$ with the Auslander–Reiten translation $\tau $.

For any positive integer $p$, the repetitive cluster category  $\mathcal{C}_{F^p}(\mathcal{H})$
was introduced  by Zhu, in \cite{Zhu},  for any hereditary abelian category $\mathcal{H}$ with
tilting objects. The Repetitive cluster category is defined as the orbit category of the bounded derived category
$\mathcal{D}^b(\mathcal{H})$  under the action of the cyclic group generated by the
auto-equivalence $F^p=(\tau^{-1}[1])^p$, where $\tau$ is
the Auslander-Reiten translation and $[1]$ is the shift functor. Repetitive cluster categories are triangulated \cite{Keller} and fractionally Calabi-Yau of dimension $\frac{2p}{p}$ \cite{Keller1} by Keller.


The cluster tilting objects in these repetitive cluster categories are shown to correspond
one-to-one to those in the classical cluster categories. The endomorphism algebras of cluster tilting objects in
the repetitive cluster category are the coverings of the cluster tilted algebras \cite{Zhu}.

When $\mathcal{H}$ is an hereditary algebra of type $D_n$ we will denote its repetitive cluster category as ${\mathcal C}_{n,p} $. For $p=1$ we recover the usual cluster category of type $D_n$, which we denote simply by
$\mathcal{C}_n$.

Many authors have been extensively studied the association of geometric models to algebraic categories
in several contexts, see for instance \cite{BM2,BM,BuTo,CCS,Lisa1,LambertiRep,S,Tork}.
In particular repetitive cluster categories of type $A_n$ have been studied by Lamberti in \cite{LambertiRep}. She showed an equivalence  of categories between the repetitive cluster category of type $A_n$ and a category of diagonals in a regular $p(n+2)$-gon. The model proposed also leads to a geometric interpretation of cluster tilting objects in the repetitive cluster category for type $A_n$.

In this paper, we give a geometric realization of the repetitive cluster
categories of type $D_n$ in the spirit of \cite{CCS} and \cite{LambertiRep}.
We addapt the geometric description given by Schiffler in \cite{S}.

Our main result is the equivalence of the category of tagged edges and the repetitive
cluster category ${\mathcal C}_{n,p}$, see Corollary \ref{main result}.

 The article is organized as follows. After a preliminary section,
in which we fix the notations and recall some concepts needed
later, section 2 is devoted to recall  the definition of the repetitive cluster category
$\mathcal{C}_{n,p}$  an some basic properties. In section 3, we study the relation between
repetitive cluster categories  and  cluster categories. Section 4 is dedicated to the definition
of  the category $\mathcal{C}(\textsf{P}_{np})$  of tagged edges. We show the equivalence of this
category and the repetitive  cluster category in section 5. Finally, in section 6, we give a
geometric interpretation of cluster tilting objects in $\mathcal{C}_{n,p}$.


\section{ Definitions and Preliminaries} \label{notation}


\subsection{Notation}

Let $\K$ be an algebraically closed field. A quiver $Q$ is the data of two sets, $Q_0$ (the vertices)
and $Q_1$ (the arrows) and two maps $s,t: Q_1\rightarrow  Q_0$ that assign to each arrow $\alpha$ its
source $s(\alpha)$ and its target $t(\alpha)$.
The {\em path algebra} of $Q$ over $\K$ will be denoted by $\K Q$.
By Gabriel \cite{G1},  $\K Q$ is of finite representation type (i.e there is only a finite  number of isoclasses of indecomposable modules)  if and only if
$Q$ is a Dynkin quiver, that is, the underlying graph of $Q$ is a
Dynkin diagram of type $A_n,D_n$ or $E_n$.\\

Given a finite dimensional algebra $A$ denote by $\mathcal{D}^b(\textup{mod}\, A)$ the
bounded derived category of finite dimensional right $A$-modules.
Its objects are bounded complexes of finite dimensional right $A$-modules,
and morphisms are obtained from morphisms of complexes by localizing with
respect to quasi-isomorphisms (see \cite{H}). The category $\mathcal{D}^b(\textup{mod}\, A)$ is triangulated, with
translation functor induced by the shift of complexes.
Throughout the paper we denote by $\mathcal{D}$ the bounded derived  category  $\mathcal{D}^b(\mathrm{mod}\K D_{n})$.

\subsection{Serre duality and Calabi-Yau categories}

Let $\K$ be a field  and let $\mathcal{K}$ be a $\K$-linear triangulated category which
is $\mathrm{Hom}$-finite, i.e.\ for any two objects in $\mathcal{T}$ the space of
morphisms is a finite dimensional vector space.

Remember from \cite{K2} that  a $\K$-triangulated category $\mathcal{K}$ has a {\em Serre
functor} if it is equipped with an auto-equivalence
$\nu:\mathcal{K}\rightarrow\mathcal{K}$ together with bifunctorial isomorphisms $$
\rm D\mathrm{Hom}_{\mathcal{K}}(X,Y)\cong\mathrm{Hom}_{\mathcal{K}}(Y,\nu X), $$ for each
$X,Y\in\mathcal{K}$, where  $\rm D$ indicates the vector space duality $\mathrm{Hom}_{\K}(-,\K)$.

We will say that $\mathcal{K}$ has Serre duality if $\mathcal{K}$ admits a
Serre functor. If $\mathcal{D}$ denotes the category $\mathcal{D}^b(\mathrm{mod}\K D_{n})$ and we consider the case $\mathcal{K}=\mathcal{D}$  a Serre functor exists
(\cite{K2} p. 24), it is unique up to isomorphism and $\nu\stackrel
{\sim}{\rightarrow}\tau[1]$, where $\tau$ is the Auslander-Reiten translate and $[1]$ is
the shift functor of $\mathcal{D}$.

For $n,m>0$,  a category $\mathcal{K}$ with Serre functor $\nu$ is said to be  {\em
fractionally Calabi-Yau  of dimension $\frac{m}{n}$} or
$\frac{m}{n}$-Calabi-Yau if there is an isomorphism of triangle functors:
$$\nu^n\cong[m]$$
where $[m]$ indicates the composition of the shift functor with itself $m$ times.

\subsection{Translation quivers}\label{mesh cat and mesh relations}

Following \cite{Riedt}, we define a {\em stable translation
 quiver}  to be a  pair $(\Gamma,\tau)$
where $\Gamma$ is a locally finite quiver and $\tau:\Gamma'_0\to
\Gamma_0$ is an injective map defined on a subset $\Gamma'_0$ of
the vertices of $\Gamma$ such that for any $x\in\Gamma_0$, $y\in\Gamma'_0$,
the number of arrows from $x$ to $y$ is the same as the number of
arrows from $\tau(y)$ to $x$.

$(\Gamma,\tau)$ is called a \emph{stable translation quiver} if $\Gamma'_0=\Gamma_0$ and $\tau$ is
bijective.

A stable translation quiver is said to be \emph{connected} if it is not a
disjoint union of two non-empty stable subquivers.

Given a stable translation quiver $(\Gamma,\tau)$, a polarization of $\Gamma$ is
 a bijection $\sigma:\Gamma_1\to\Gamma_1$ such that
$\sigma(\za):\tau\,x\to y$  for every arrow $\za:y\to x\in \Gamma_1$.
If $\Gamma $ has no  multiple arrows, then there is a unique polarization.

We remark that in all examples of stable translation quivers appearing
in this article, the number of arrows between two vertices is always at most $1$.\\

The {\em path category } of $(\Gamma,\tau)$ is the category whose  objects are
the vertices of $\Gamma$, and given $x,y\in \Gamma_0$, the $\K$-space of
morphisms from $x$ to $y$ is given by the $\K$-vector space with basis
the set of all paths from $x $ to $y$. The composition of morphisms is
induced from the usual composition of  paths.

The {\em mesh ideal} in the path category of $\Gamma$ is the ideal
generated by the {\em mesh relations}
\begin{equation}\nonumber
m_x =\sum_{\za:y\to x} \sigma(\za) \za.
\end{equation}

The {\em  mesh category } $\mesh (\Gamma,\tau)$ of $(\Gamma,\tau)$ is the
quotient of the path
category of $(\Gamma,\tau)$ by the mesh ideal.\\

Given a quiver $Q$ one can construct a stable translation quiver
$\mathbb{Z}Q$ as follows: $(\mathbb{Z}Q)_0=\mathbb{Z}\times Q_0$ and
the number of arrows in $\mathbb{Z}Q$ from $(i,x)$ to $ (j,y)$ equals the number of
arrows  in $Q$ from $x$ to $y$ if $i=j$, and equals the number of arrows in $Q$
from
$y$ to $x$ if $j=i+1$, and there are no arrows otherwise. The
translation $\tau$ is defined by $\tau((i,x))=(i-1,x)$.\\

Important examples of translation quivers are the Auslander-Reiten
quivers of the derived categories of hereditary algebras of finite
representation type. We shall need the following proposition.

\begin{prop} \cite[I.5]{H}\label{prop 1}
Let $Q$ be a Dynkin quiver. Then
\begin{enumerate}
\item the Auslander-Reiten quiver of $\mathcal{D}^b(\rm mod  \K Q)$
  is $\mathbb{Z}Q^{op}$.
\item the category $\ind\mathcal{D}^b (\rm mod \K Q)$ is equivalent
  to the mesh category of  $\mathbb{Z}Q^{op}$.
\end{enumerate}
\end{prop}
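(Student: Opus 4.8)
The plan is to establish this by carefully tracking how the derived category of a hereditary algebra is built up, and matching it against the combinatorial construction of $\mathbb{Z}Q^{\mathrm{op}}$. Since $Q$ is a Dynkin quiver, $\K Q$ is hereditary of finite representation type by Gabriel's theorem, so its module category $\mathrm{mod}\,\K Q$ has a well-understood Auslander--Reiten quiver, and every indecomposable object of $\mathcal{D}^b(\mathrm{mod}\,\K Q)$ is isomorphic to a shift $M[i]$ of an indecomposable module $M$. First I would recall the structure theory of the derived category of a hereditary algebra: because $\K Q$ has global dimension at most $1$, there are no nontrivial extensions across different shifts, and the indecomposables of $\mathcal{D}^b$ are exactly the stalk complexes $M[i]$ with $M$ indecomposable and $i \in \mathbb{Z}$. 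This immediately gives a bijection on objects between $\ind\mathcal{D}^b(\mathrm{mod}\,\K Q)$ and the vertices $\mathbb{Z}\times Q_0$ of $\mathbb{Z}Q^{\mathrm{op}}$, once one identifies each $Q_0$-indexed copy with the AR quiver of $\mathrm{mod}\,\K Q$ placed in homological degree $i$.

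Next I would verify part (1), that the shape of the AR quiver matches $\mathbb{Z}Q^{\mathrm{op}}$. The key inputs are that the derived category admits almost split triangles (this is where Serre duality from the previous subsection enters, giving $\nu \cong \tau[1]$ and hence the existence of the AR translate $\tau$ on all of $\mathcal{D}^b$), and that the AR quiver of $\mathrm{mod}\,\K Q$ for $Q$ Dynkin is a finite connected piece whose $\mathbb{Z}$-translate tiles the whole derived AR quiver. Concretely, I would show the irreducible maps between indecomposables in $\mathcal{D}^b$ correspond exactly to the arrows prescribed in the definition of $\mathbb{Z}Q$: arrows within a fixed degree coming from irreducible maps in $\mathrm{mod}\,\K Q$, and arrows between consecutive degrees coming from the way the shift interacts with $\tau$. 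The orientation reversal producing $Q^{\mathrm{op}}$ rather than $Q$ comes from the convention relating the direction of arrows in the AR quiver to the direction of irreducible morphisms.

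For part (2), the equivalence $\ind\mathcal{D}^b(\mathrm{mod}\,\K Q) \simeq \mesh(\mathbb{Z}Q^{\mathrm{op}})$, I would use the fundamental fact that for a representation-finite hereditary algebra the mesh relations $m_x = \sum_{\za:y\to x}\sigma(\za)\za$ are precisely the relations induced by the almost split triangles: the composite of an irreducible map into $x$ with the corresponding map out of $\tau x$ sums to zero in the category. Thus the functor sending a vertex to the corresponding indecomposable and an arrow to a chosen irreducible morphism is full, and its kernel is exactly the mesh ideal, so it descends to a faithful functor on the mesh category. Density is clear from the object-level bijection, and fullness plus the identification of relations gives faithfulness.

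The main obstacle I anticipate is not the object-level bijection but the precise bookkeeping in part (2): verifying that the mesh ideal captures \emph{all} relations among the irreducible morphisms, i.e.\ that there are no further relations beyond the mesh relations and that the chosen basis of paths modulo the mesh ideal really does match the dimensions of the morphism spaces $\Hom_{\mathcal{D}^b}(M[i], N[j])$. This requires knowing the AR structure finely enough to see that every morphism factors as a sum of compositions of irreducible maps (which holds because of representation-finiteness and the radical filtration terminating), and that Serre duality pins down the dimensions so that no extra cancellation or extra morphism appears. Since this is a cited result from \cite[I.5]{H}, I would lean on Happel's detailed analysis of derived categories of hereditary algebras rather than reprove the mesh-relation completeness from scratch.
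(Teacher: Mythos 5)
The paper does not actually prove this proposition: it is quoted, with citation, from Happel \cite[I.5]{H}, and the surrounding text simply uses it. Your proposal is, in outline, exactly Happel's argument --- stalk complexes via $\mathrm{gl.dim}\,\K Q\le 1$, existence of almost split triangles via Serre duality, mesh relations arising from the AR triangles, and the hard ``no further relations'' step deferred back to Happel --- so at the level of approach there is nothing to contrast: you and the paper lean on the same source, and your skeleton of the proof is the correct one.

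There is, however, one concrete misstep in your write-up. You claim the object-level bijection is ``immediate'' by identifying ``each $Q_0$-indexed copy,'' i.e.\ each column $\{i\}\times Q_0$ of $\mathbb{Z}Q^{\mathrm{op}}$, with the indecomposables of $\mathrm{mod}\,\K Q$ placed in homological degree $i$. That identification is false on cardinality grounds alone: a column has $\lvert Q_0\rvert$ vertices, whereas $\ind(\mathrm{mod}\,\K Q)$ has as many objects as positive roots of the corresponding root system (for $D_4$: $12$ indecomposable modules versus $4$ vertices). The correct bookkeeping --- and it is the one the paper sets up immediately after the proposition via the map $\pos$ --- parametrizes $\ind\mathcal{D}^b(\mathrm{mod}\,\K Q)$ by $\tau$-orbits of the indecomposable projectives, sending the vertex $(i,j)$ to $\tau^{-i}P_j$; a shifted copy $\ind(\mathrm{mod}\,\K Q)[i]$ then occupies a wedge-shaped region of $\mathbb{Z}Q^{\mathrm{op}}$ spanning many columns, not a single one. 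This error does not sink your strategy, since the desired bijection exists and is standard, but the step as stated would fail and must be replaced by the $\tau$-orbit parametrization before the rest of your argument (irreducible maps match arrows, kernel of the functor equals the mesh ideal) can proceed.
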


\subsubsection{The stable translation quiver of $\mathcal{D}$}

Let $Q$ be a quiver of underlying Dynkin type $D_n$. We denote the vertices of $Q$ with $0,\overline{0},1,\cdots, n-2$ and the arrows are $i-1\rightarrow i$ ($i=1, \cdots , n-2$) together with $\overline{0}\rightarrow 1$;  see Figure~\ref{quiverdn}.

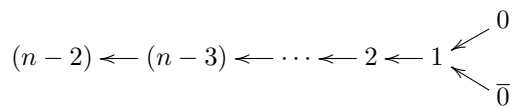
\begin{figure}[H]
\begin{center}
\[\xymatrix@R0pt@C14pt{
&&&&&& 0 \ar[dl]\\
& (n-2)
&(n-3)\ar[l]
&\cdots \ar[l]
&2\ar[l]
&1\ar[l]\\
&&&&&& \overline{0}\ar[lu].
}
\]
\caption{The quiver of type  $D_n$}
\label{quiverdn}
\end{center}
\end{figure}

By proposition \ref{prop 1}, the
Auslander-Reiten quiver of $\mathcal{D}$ is the stable translation
quiver $\ZZ Q^{op}$.  The labels of $Q_0$ induce labels
on the vertices of $\ZZ Q^{op}$ as usual:

\[ (\ZZ Q^{op})_0=\{(i,j)\mid i\in \ZZ, \ j\in Q_0\}=\ZZ\times\{0,\overline{0},1,\ldots,n-2\}.\]



We can identify the indecomposable objects of $\mathcal{D}$ with the
vertices of $\ZZ Q$ as follows. Let $P_1$ be the indecomposable projective module
corresponding to the vertex $1\in Q_0$. Then, by defining the position
of $P_1$ to be $(0,1)$, we have a bijection

\[ \pos : \ind \mathcal{D} \to \ZZ\times\{0,\overline{0},1,\ldots,n-2\}. \]

In other terms, for $M\in\ind \mathcal{D}$, we have $\pos (M)=(i,j)$ if and
only if $M=\tau^{-i}\,P_j$, where $P_j$ is the indecomposable
projective $A$-module at vertex $j$.
The "integer" $j\in\{\overline{0},0,\ldots,n-2\}$ is called the {\em level} of $M$ and
will be denoted by $\level(M)$.

If $\pos(M)=(i,j)$ with $j\in \{0,\overline{0}\}$ then
let $M^-$ be the indecomposable object such that $\pos(M^-)=(i,j')$,
where $j'$ is the unique element in $\{0,\overline{0}\}\setminus\{j\}$.

The structure of the derived
category $\mathcal{D}$ is well known.
In particular, we have the following result.

\begin{lema}\cite{S} Let  $M\in\ind\mathcal{D}$.
\begin{enumerate}
\item If $n$ is even, then $M [1]=\tau_{\mathcal{D}}^{-n+1}\,M$.
\item If $n$ is odd, then
\[M[1]=\left\{\begin{array}{ll}
\tau_{\mathcal{D}}^{-n+1}\,M &\textup{if  $\level(M)\notin\{0,\overline{0}\}$}\\
\\
\tau_{\mathcal{D}}^{-n+1}\,M^- &\textup{if $\level(M)\in\{0,\overline{0}\}$}
\end{array}\right. \]
\end{enumerate}
\end{lema}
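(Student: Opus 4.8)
The plan is to reduce the statement to locating the indecomposable injective modules inside the Auslander--Reiten quiver $\ZZ Q^{op}$ of $\mathcal{D}$, and then to read the action of $[1]$ off the Serre functor. First I would use that $\mathcal{D}$ has Serre functor $\nu\cong\tau[1]$ and that, since $\K D_n$ is hereditary, the Nakayama functor $\nu$ carries the indecomposable projective $P_j$ to the indecomposable injective $I_j$. Thus $\nu P_j=I_j$ reads $\tau(P_j[1])=I_j$, that is $P_j[1]=\tau^{-1}I_j$. Because $[1]$ commutes with $\tau$ and every object of $\ind\mathcal{D}$ has the form $\tau^{-i}P_j$, it suffices to compute $\pos(I_j)$ for each $j$: once the positions of the injectives are known, the action of $[1]$ on all of $\ind\mathcal{D}$ is determined.

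Next I would locate the injectives by knitting the Auslander--Reiten quiver of $\mathrm{mod}\,\K D_n$ from the projective slice, which sits in column $0$ since $\pos(P_j)=(0,j)$. As $D_n$ has $n(n-1)$ positive roots and the quiver has $n$ vertices, each $\tau$-orbit of a module contains exactly $n-1$ indecomposables, namely $P_j,\tau^{-1}P_j,\dots,\tau^{-(n-2)}P_j$; hence every injective lies in column $n-2$, so $I_j=\tau^{-(n-2)}P_{\rho(j)}$ for some permutation $\rho$ of the levels. For a level $j\in\{1,\dots,n-2\}$ the knitting shows $I_j$ occupies level $j$, so $\rho$ fixes these levels; the only remaining freedom is on the fork $\{0,\overline 0\}$, where $\rho$ is either the identity or the transposition $0\leftrightarrow\overline 0$.

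The crux is to pin down $\rho$ on the fork, and this is exactly where the parity of $n$ enters. Tracking the simple injectives $I_0=S_0$ and $I_{\overline 0}=S_{\overline 0}$ through the $n-2$ mesh steps from column $0$ to column $n-2$ shows that the net effect on $\{0,\overline 0\}$ is governed by the parity of $n$: it is the identity for $n$ even and the transposition $0\leftrightarrow\overline 0$ for $n$ odd (equivalently $I_0$ ends at level $0$ when $n$ is even and at level $\overline 0$ when $n$ is odd, and symmetrically for $I_{\overline 0}$). I would make this precise either by an induction on $n$, where lengthening the linear tail by one vertex inserts one extra $\tau$-step before the fork and so toggles the swap, or by a direct knitting computation, anchoring on the base cases $n=4$ and $n=5$. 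Feeding $\pos(I_j)=(n-2,\rho(j))$ back into $P_j[1]=\tau^{-1}I_j$ gives $\pos(P_j[1])=(n-1,\rho(j))$; since $[1]$ is an automorphism of $(\ZZ Q^{op},\tau)$ commuting with $\tau$ and sending $(0,j)$ to $(n-1,\rho(j))$, it must equal $\tau^{-(n-1)}$ when $\rho=\mathrm{id}$ and $\tau^{-(n-1)}$ followed by the fork swap $M\mapsto M^-$ otherwise. These are precisely the two cases of the statement. The main obstacle is the parity bookkeeping for the fork injectives, since all the genuine content of the lemma---the appearance of $M^-$ for odd $n$---is concentrated there, while the remaining position computations are routine knitting.
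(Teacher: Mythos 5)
Your reduction is the right one, and it is essentially how this fact is usually established (note that the paper itself gives no proof of this lemma: it quotes it from Schiffler's paper \cite{S}, where it is likewise extracted from the known structure of $\mathcal{D}$): from $\nu\cong\tau[1]$ and $\nu P_j\cong I_j$ you get $P_j[1]=\tau^{-1}I_j$, and since $[1]$ commutes with $\tau$ and every object of $\ind\mathcal{D}$ is of the form $\tau^{-i}P_j$, the lemma follows once $\pos(I_j)$ is known for every $j$. The problem is that neither of the two facts you then need is actually proved. The first gap is the claim that every $\tau$-orbit in $\mathrm{mod}\,\K D_n$ has exactly $n-1$ terms ``because $D_n$ has $n(n-1)$ positive roots and $n$ vertices''. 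Counting only yields the \emph{average} orbit length; it does not force the orbits to have equal length, and this inference genuinely fails as a pattern: for $A_3$ with linear orientation there are $6$ positive roots and $3$ orbits, but their sizes are $1$, $2$ and $3$. For $D_n$ with the present orientation the conclusion (all injectives sit in column $n-2$) is true, but proving it goes through the Nakayama pairing $m_j+m_{\rho(j)}=h-2=2n-4$ ($h$ the Coxeter number) together with the determination of the permutation $\rho$ --- that is, through precisely the fork/parity information the lemma is about. So this step is not free; it is entangled with the crux.

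The second and more serious gap is that the crux itself --- $\rho=\mathrm{id}$ for $n$ even and $\rho$ the transposition $0\leftrightarrow\overline{0}$ for $n$ odd --- is never argued. ``The knitting shows'', ``an induction on $n$ where lengthening the tail toggles the swap'', and ``anchoring on the base cases $n=4$ and $n=5$'' are statements of intent, and the toggle claim in particular is exactly the content to be proved: nothing in your text explains why adding one vertex to the linear tail exchanges the two fork levels at the injective slice. You correctly observe that all the genuine content of the lemma is concentrated at this point, but that content is left unexecuted. To close the argument you would need to actually carry out the computation: for instance, propagate dimension vectors along the $\tau$-orbits via the Coxeter transformation, or invoke the classical fact that the longest Weyl group element of $D_n$ satisfies $w_0=-1$ for $n$ even and $w_0=-\sigma$ ($\sigma$ the diagram automorphism) for $n$ odd, which is equivalent to the parity of the swap. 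As it stands, your proposal sets up a correct framework but proves neither of the two statements that the framework requires.
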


\section{Repetitive cluster categories of type $D_n$ }

For $p>0$, the {\em repetitive cluster category} of type $D_n$ is defined as the orbit category
of the bounded derived category  $\mathcal{D}$ of $\mathrm{mod} \K D_{n}$ under the
action of the cyclic group generated by the auto-equivalence
$(\tau^{-1}[1])^p=\tau^{-p}[p]$:

$$\mathcal{C}_{n,p}=\mathcal{D}/<\tau^{-p}[p]>$$

where $\tau$ is the AR-translation in $\mathcal{D}$ and $[1]$ is the shift functor.

Thus the objects $\tilde M$ of $\mathcal{C}_{n,p}$ are the $(\tau^{-1}[1])^p$-orbits
$\tilde M :=((\tau^{-p}[p])^i\, M)_{i\in\ZZ}$ of
objects $M\in \mathcal{D}$. For simplicity later on we will omit the tilde.

For two objects $\tilde M, \tilde N $ in $\mathcal{C}_{n,p}$ the class of morphism is given by

$$\mathrm{Hom}_{\mathcal{C}_{n,p}}(\tilde M,\tilde N)= \bigoplus_{i\in \mathbb{Z}}\mathrm{Hom}_{\mathcal{D}}(M, (\tau^{-p}[p])^i N) $$

Repetitive cluster categories were defined by
 Zhu \cite{Zhu} for any hereditary abelian category with tilting objects. In case $p=1$, this definition specializes to that of the usual cluster category that we denote simply $\mathcal{C}_n$.

Let $\mathcal{F}=\mathcal{F}_1$ be the fundamental domain for the
functor $F:=\tau^{-1}[1]$ in $\mathcal{D}$. It is given by the isoclasses of indecomposables
objects in $\mathrm{mod} \K D_n$  together with the $[1]$-shift of the projective
indecomposable modules.

Let $\mathrm{ind}(\mathcal{C}_n)$ denote the subcategory of isomorphism classes of indecomposable objects of $\mathcal{C}_n$. Then, after \cite[Proposition 1.6]{BMRRT}, $\mathrm{ind}(\mathcal{C}_n)$ can be identified with the objects in $\mathcal{F}$.
Let $F^k$ be the composition of $F$ with itself $k$-times.  Denote by $\mathcal{F}_k$ the image under  $F^k$
of the fundamental domain $\mathcal{F}$. Then we can draw the fundamental domain for the functor
$\tau^{-p}[p]$ as in Figure \ref{funddom}.

\begin{figure}[H]
 \unitlength=1cm
   \begin{center}
      \centering \includegraphics[width=0.7\textwidth]{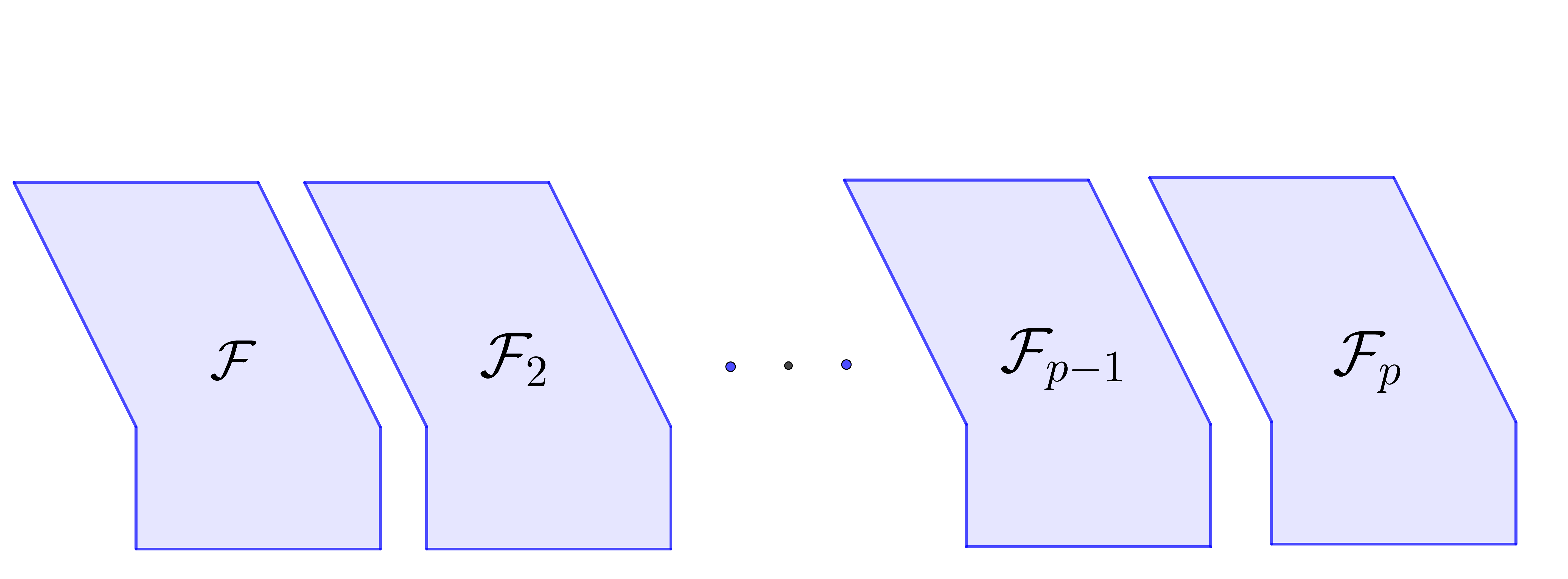}
   \end{center}
\caption{Partition of the fundamental domain of $\tau^{-p}[p]$ }\label{funddom}
\end{figure}

Following \cite{Zhu} we have the projection functor $\eta_p:\mathcal{C}_{n,p}\rightarrow \mathcal{C}_n$ which sends an object $X$ in $\mathcal{C}_{n,p}$ to an object $X$ in $\mathcal{C}_n$, and $\phi:X\rightarrow Y$ in
$\mathcal{C}_{n,p}$ to the morphism $\phi:X\rightarrow Y$ in $\mathcal{C}_{n}$.

Furthermore, if $\pi_p:\mathcal{D}\rightarrow \mathcal{C}_{n,p}$ and $\pi_1:\mathcal{D}\rightarrow \mathcal{C}_{n}$ are the  respective projections it holds that $\pi_1=\pi_p\circ\eta_p$ and the three projections are triangle functors.

After \cite[Proposition 3.3]{Zhu}, we have that $\mathcal{C}_{n,p}$ is a triangulated category with AR-triangles
 and Serre functor $\nu:=\tau[1]$ which is a fractionally Calabi-Yau category of dimension $\frac{2p}{p}$. Moreover,  $\mathcal{C}_{n,p}$ is a Krull-Schmidt category and $\mathrm{ind}(\mathcal{C}_{n,p})=\bigcup_{i=1}^{p}
     \mathrm{ind}(\mathcal{F}_i)$.

Repetitive  cluster categories were studied by Zhu in \cite{Zhu} from a purely algebraic point of view and  for type $A_n$  by Lamberti in \cite{LambertiRep} in a geometrical-combinatorial way. We will do a similar study for  type $D_n$.

\subsection{The Auslander-Reiten quiver of  $\mathcal{C}_{n,p}$}

Since  the Auslander-Reiten quiver of the
cluster category $\mathcal{C}_{n}$,  denoted by
$\Gamma(D_n,1)$,
 is the stable translation quiver built from $n$ copies
of $Q^{op}$ (see~\cite[\S1]{BMRRT},~\cite{H}), after \cite[Proposition 3.3]{Zhu}  we can see the Auslander-Reiten quiver of the repetitive  cluster category $\mathcal{C}_{n,p}$,  denoted by
$\Gamma_{n,p}$,
 as the stable translation quiver built from $np$ copies
of $Q^{op}$. The vertices of $\Gamma_{n,p}$ are:

\[ (\Gamma_{n,p})_0=\{(i,j)\mid i\in \mathbb{Z}_{np}, \ j\in Q_0\}=\mathbb{Z}_{np}\times\{0,\overline{0},1,\ldots,n-2\}; \]

and there is an arrow  $(i,j)\to (i,k)$ and an arrow $(i,k)\to (i+1,k)$ whenever there is an arrow $j\to k$ in $Q^{op}$. \\

The translation $\tau$ is given by

\[
\tau(i,j)=\left\{\begin{array}{ll}
(i-1,\overline{j}), & \text{if $i=0$, $j\in\{0,\overline{0}\}$ and
$np$ is odd; }\\
(i-1,j), & \text{otherwise.}
\end{array}\right.
\]
Where we use the convention that $\overline{\overline{0}}=0$.

We draw the quivers $\Gamma(D_n,1)$ and $\Gamma_{n,p}$ for
$n=3$ and $p=2$ as an example; see Figures~\ref{quiverd3} and~\ref{quiverd32}.
The translation $\tau$ is indicated by dotted lines
(it is directed to the left).

\begin{figure}[H]
\begin{center}

$${\small\xymatrix@-6mm{
(0,1)\ar[dr]\ar[ddr]\ar@{.}[rr] & &
 (1,1)\ar[dr]\ar[ddr]\ar@{.}[rr] & &
 (2,1)\ar[dr]\ar[ddr]\ar@{.}[rr] & &
 (0,1)\ar[dr]\ar[ddr]  \\
 & (0,0)\ar[ur]\ar@{.}[rr] & & (1,0)\ar[ur]\ar@{.}[rr] & &
 (2,0)\ar[ur]\ar@{.}[rr] & &
 (0,\overline{0})\\
 & (0,\overline{0})\ar[uur]\ar@{.}[rr] & &
 (1,\overline{0})\ar[uur]\ar@{.}[rr]
 & & (2,\overline{0})\ar[uur]\ar@{.}[rr] & &
 (0,0)
}}
$$
\caption{The quiver $\Gamma(D_3,1)$}
\label{quiverd3}
\end{center}
\end{figure}
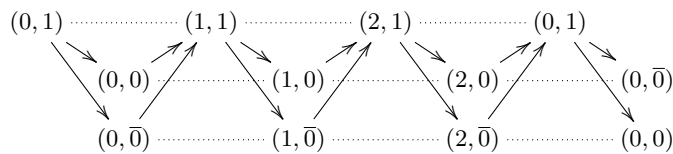

\begin{figure}[H]
\begin{center}

$${\small\xymatrix@-6mm{
&(0,1)\ar[dr]\ar[ddr]\ar@{.}[rr] & &
 (1,1)\ar[dr]\ar[ddr]\ar@{.}[rr] & &
 (2,1)\ar[dr]\ar[ddr]\ar@{.}[rr] & &
 (3,1)\ar[dr]\ar[ddr]\ar@{.}[rr] & &
 (4,1)\ar[dr]\ar[ddr]\ar@{.}[rr] & &
 (5,1)\ar[dr]\ar[ddr]\ar@{.}[rr] & &
 (0,1)\ar[dr]\ar[ddr] \\
 & & (0,0)\ar[ur]\ar@{.}[rr] & &
 (1,0)\ar[ur]\ar@{.}[rr] & &
 (2,0)\ar[ur]\ar@{.}[rr] & &
 (3,0)\ar[ur]\ar@{.}[rr] & &
 (4,0)\ar[ur]\ar@{.}[rr] & &
 (5,0)\ar[ur]\ar@{.}[rr] & &
 (0,0) \\
 & & (0,\overline{0})\ar[uur]\ar@{.}[rr] & &
 (1,\overline{0})\ar[uur]\ar@{.}[rr] & &
 (2,\overline{0})\ar[uur]\ar@{.}[rr] & &
 (3,\overline{0})\ar[uur]\ar@{.}[rr] & &
 (4,\overline{0})\ar[uur]\ar@{.}[rr] & &
 (5,\overline{0})\ar[uur]\ar@{.}[rr] & &
 (0,\overline{0})
}}
$$
\caption{The quiver $\Gamma_{3,2}$}
\label{quiverd32}
\end{center}
\end{figure}
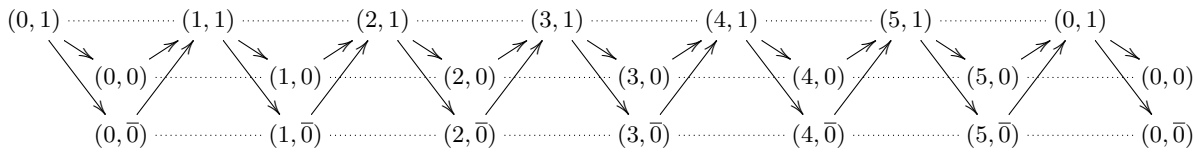

Note that the switch described above only occurs
when $np$ is odd.

\section{$\mathcal{C}_{n,p}$ and the link to the cluster category}

One of our goals is to realise the Auslander-Reiten quiver ( or AR-quiver for short) for the repetitive cluster category in terms of the AR-quiver of a cluster category of type $D_t$ for certain value of $t$. \\

In the following denote by $(\Gamma_{n,p},\tau_{n,p})$ the AR-quiver of the repetitive cluster category
$\mathcal{C}_{n,p}$ and by  $(\Gamma(D_t,1),\tau_t)$ the AR-quiver of the cluster category $\mathcal{C}_t$.

\begin{lema}\label{subquiver}

Let $t=np$. Then $(\Gamma_{n,p},\tau_{n,p})$ is a
subquiver of $(\Gamma(D_t,1),\tau_t)$ .

\end{lema}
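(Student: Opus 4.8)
The plan is to establish the subquiver relation by comparing the combinatorial data of both translation quivers directly, using the explicit descriptions already given in the excerpt. Since $\Gamma(D_t,1)$ is the stable translation quiver built from $t = np$ copies of $Q^{op}$, its vertex set is
\[
(\Gamma(D_t,1))_0 = \mathbb{Z}_{t}\times\{0,\overline{0},1,\ldots,t-2\},
\]
whereas $(\Gamma_{n,p})_0 = \mathbb{Z}_{np}\times\{0,\overline{0},1,\ldots,n-2\}$. Both have first coordinate ranging over $\mathbb{Z}_{np}$, so the natural candidate for the inclusion is the identity on the first coordinate together with the obvious inclusion $\{0,\overline{0},1,\ldots,n-2\}\hookrightarrow\{0,\overline{0},1,\ldots,t-2\}$ on levels, valid because $n-2 \le t-2$ when $p\ge 1$. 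First I would define this vertex map $\iota\colon (\Gamma_{n,p})_0 \to (\Gamma(D_t,1))_0$ explicitly and check it is injective.

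\emph{Checking arrows and the translation.} The second step is to verify that $\iota$ respects the arrow structure and the translation $\tau$. By the description of $\Gamma_{n,p}$, there is an arrow $(i,j)\to(i,k)$ and an arrow $(i,k)\to(i+1,k)$ precisely when $j\to k$ is an arrow in $Q^{op}$; the same recipe governs $\Gamma(D_t,1)$ restricted to the levels $\{0,\overline{0},1,\ldots,n-2\}$, since the copy of $Q^{op}$ for type $D_t$ contains the $D_n$-subdiagram on these first $n-1$ vertices. Thus every arrow of $\Gamma_{n,p}$ maps to an arrow of $\Gamma(D_t,1)$, and I would confirm that no spurious identifications of arrows occur. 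For the translation, the explicit formula for $\tau_{n,p}$ involves a level-switch $(0,j)\mapsto(np-1,\overline{j})$ exactly when $np$ is odd and $j\in\{0,\overline{0}\}$; the formula for $\tau_t$ on $\Gamma(D_t,1)$ has the identical shape, switching levels $0\leftrightarrow\overline{0}$ at the boundary precisely when $t=np$ is odd. Since both switches are governed by the same parity of $np$, compatibility $\iota\circ\tau_{n,p} = \tau_t\circ\iota$ holds on the nose.

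\emph{Main obstacle.} The step I expect to require the most care is confirming that the translation $\tau$ on the larger quiver, when restricted along $\iota$, agrees with $\tau_{n,p}$ rather than merely being ``similar.'' The subtlety lies in the parity condition: the level-switch in both quivers is triggered by the oddness of the same integer $np=t$, so one must check that the boundary index at which the switch occurs (namely $i=0$, equivalently the wrap-around in $\mathbb{Z}_{np}$) is preserved by $\iota$, and that no additional switches are introduced by the extra levels $n-1,\ldots,t-2$ present in $\Gamma(D_t,1)$ but absent from the image of $\iota$. Since $\iota$ lands only in levels $\{0,\overline{0},1,\ldots,n-2\}$, the translation on the image never sees those extra levels, so this reduces to a direct comparison of the two piecewise formulas, which match because they are driven by the same parity. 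Once this is verified, $\iota$ is a morphism of stable translation quivers that is injective on vertices and arrows, which is exactly the assertion that $(\Gamma_{n,p},\tau_{n,p})$ is a subquiver of $(\Gamma(D_t,1),\tau_t)$.
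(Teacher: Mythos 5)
Your proposal is correct and takes essentially the same approach as the paper: the paper likewise realizes $\Gamma_{n,p}$ as the translation subquiver formed by the $\tau_t$-orbits of the bottom $n$ levels of $\Gamma(D_{np},1)$, and its crux is the same parity observation, namely that the level switch $0\leftrightarrow\overline{0}$ at the wrap-around is triggered in both quivers by the oddness of the same integer $np=t$ (phrased in the paper as ``$np$ is odd if and only if $n$ and $p$ are both odd''). The only slip is cosmetic: the level set $\{0,\overline{0},1,\ldots,n-2\}$ contains $n$ vertices, not $n-1$.
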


\begin{proof} 
We define a subquiver $\tilde{\Gamma}$ of
$\Gamma(D_{t},1)$ taking the union of the
$\tau_{t}$-orbits of the  bottom $n$ rows of the quiver $\Gamma(D_{t},1)$, illustrated
in the darkest strip in figure \ref{fig.rep}. Since $\Gamma(D_{t},1)$ is a stable translation quiver, the same remains true for $\tilde{\Gamma}$.

\begin{figure}[H]
 \unitlength=5cm
   \begin{center}
      \centering \includegraphics[width=0.7\textwidth]{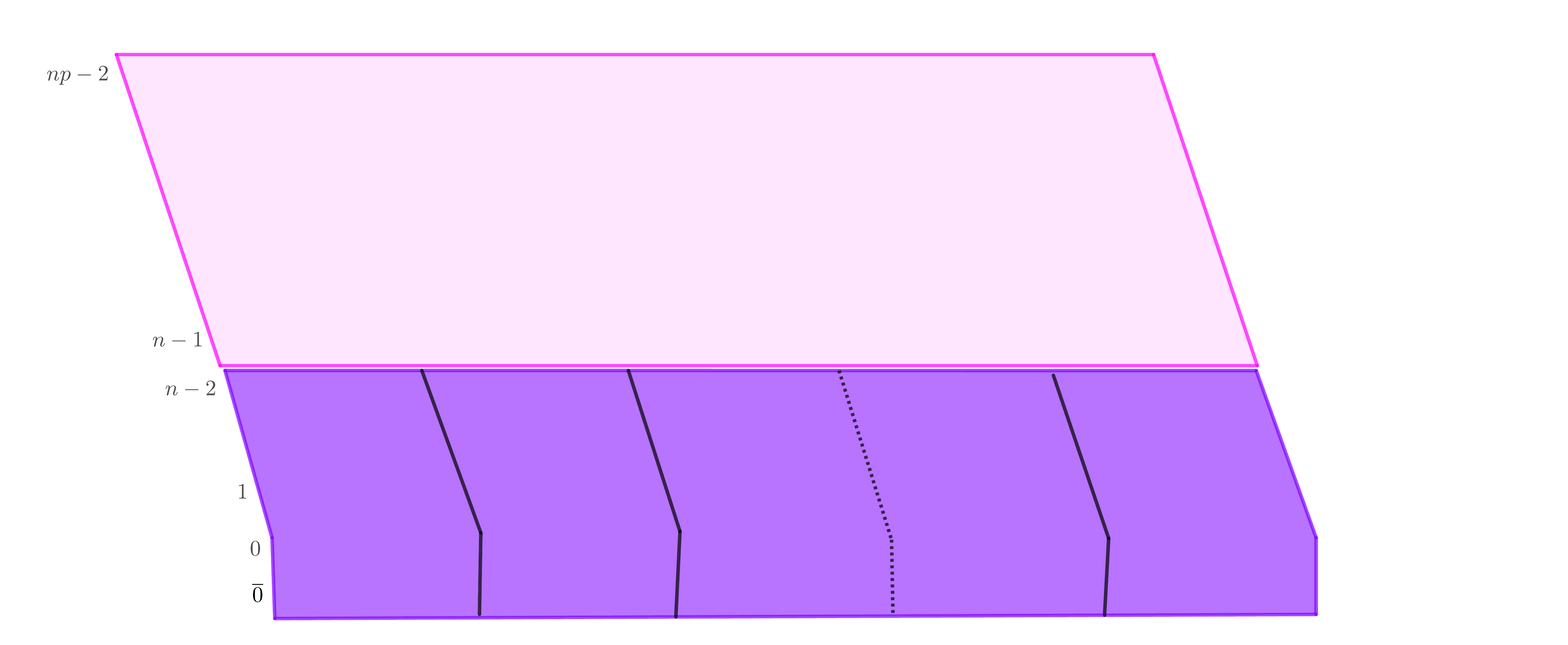}
\caption{Inclusion $\Gamma_{n,p}\subset \Gamma(D_{np},1)$.}
\label{fig.rep}
   \end{center}
\end{figure}

We will show that there is an  isomorphism of stable translation
quivers between $\Gamma_{n,p}$ and $\tilde{\Gamma}$. 

By construction, $\tilde{\Gamma}$ and  $\Gamma_{n,p}$ have both the same number of rows (namely $n$).
In order to show a quiver isomorphism between $\tilde{\Gamma}$ and  $\Gamma_{n,p}$ it remaind to 
compare the induced action of the
auto equivalence $(\tau_{n,p}^{-1}[1])^p$ on $\Gamma_{n,p}$ with the action of
$(\tau_{t}^{-1}[1])|_{\tilde{\Gamma}}$ on $\tilde{\Gamma}$.
Remember that $\Gamma(D_{r},1)$ identifies the vertices $(0,j)$ and $(r,j)$ for every $j\notin \{0,\overline{0}\}$. For $r$ even, also identifies the vertices $(0,0)$ and $(0,\overline{0})$ with the vertices $(r,0)$ and  $(r,\overline{0})$ respectively. However, for $r$ odd it identifies  $(0,0)$ with $(r,\overline{0})$ and $(0,\overline{0})$ with $(r,0)$.
Then the actions coincide because $np$ is odd if and only if $n$ and $p$ are both odd.

Finally, since the meshes of the quivers
$\tilde{\Gamma}$  and  $\Gamma_{n,p}$ coincide we deduce that this gives an
isomorphism of stable translation quivers.

\end{proof}

Now,  we describe the other component arising in the
 translation quiver $(\Gamma(D_{np},1),\tau)$.\\

\begin{prop}\label{prop:A-components}
The quiver $\Gamma(D_{np},1)$ has $1$ connected
component isomorphic to the Auslander-Reiten quiver
of $D^b(A_{n(p-1)})/\tau^{np}$.
\end{prop}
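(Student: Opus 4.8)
The plan is to exhibit the claimed component explicitly as the sub-translation-quiver of $\Gamma(D_{np},1)$ spanned by the levels that are complementary to those occupied by $\Gamma_{n,p}$. By Lemma~\ref{subquiver}, after the identification of $\Gamma_{n,p}$ with $\tilde\Gamma$ the copy of $\Gamma_{n,p}$ sits inside $\Gamma(D_{np},1)$ as the bottom $n$ rows, that is, as the full subquiver on the levels $\{0,\overline 0,1,\ldots,n-2\}$. I would therefore let $\mathcal{B}$ be the full subquiver of $\Gamma(D_{np},1)$ on the remaining levels $\{n-1,n,\ldots,np-2\}$, equipped with the restriction of the translation $\tau$, and show that $(\mathcal{B},\tau|_{\mathcal{B}})$ is a connected stable translation quiver isomorphic to $\mathbb{Z}A_{n(p-1)}/\tau^{np}$; by Proposition~\ref{prop 1} this is exactly the Auslander--Reiten quiver of $D^b(\mathrm{mod}\,\K A_{n(p-1)})/\tau^{np}$.

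The first step is to verify that $\mathcal{B}$ is closed under the translation and is itself a stable translation quiver. The fork of the $D_{np}$-diagram lies at the levels $0,\overline 0$, which belong to $\Gamma_{n,p}$ and not to $\mathcal{B}$, so the translation on $\mathcal{B}$ never invokes the exceptional rule $\tau(0,j)=(i-1,\overline{j})$ of Lemma~\ref{subquiver}; on $\mathcal{B}$ it is simply $\tau(i,j)=(i-1,j)$, a bijection of the vertex set $\mathbb{Z}_{np}\times\{n-1,\ldots,np-2\}$. The levels $n-1,n,\ldots,np-2$ form a linear sub-path of the $D_{np}$-diagram, i.e. a Dynkin diagram of type $A_{n(p-1)}$ on $n(p-1)$ nodes, and the arrows internal to these levels are precisely those of the $A$-pattern; in particular $\mathcal{B}$ is connected.

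Next I would match $\mathcal{B}$ with $\mathbb{Z}A_{n(p-1)}/\tau^{np}$. The linear profile of these levels identifies the underlying infinite strip with $\mathbb{Z}A_{n(p-1)}^{op}$, and the cyclic identification built into $\Gamma(D_{np},1)$ — which, as recorded in the proof of Lemma~\ref{subquiver}, glues $(0,j)$ to $(np,j)$ for every non-fork level $j$ and with no swap — restricts on these levels to exactly the relation $\tau^{np}=\mathrm{id}$. Hence $\mathcal{B}\cong \mathbb{Z}A_{n(p-1)}^{op}/\tau^{np}$, and Proposition~\ref{prop 1}(1) identifies $\mathbb{Z}A_{n(p-1)}^{op}$ with the AR-quiver of $D^b(\mathrm{mod}\,\K A_{n(p-1)})$, giving the stated component. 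A vertex count then confirms that $\mathcal{B}$ exhausts the complement of $\Gamma_{n,p}$: the quiver $\Gamma(D_{np},1)$ has $(np)^2$ vertices and $\Gamma_{n,p}$ has $n\cdot np$, leaving $n(p-1)\cdot np$ vertices, which is precisely $|\mathcal{B}|$, so $\mathcal{B}$ is the single remaining component arising in $\Gamma(D_{np},1)$.

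I expect the main obstacle to be the careful treatment of the seam between the two strips: one must check that deleting the bottom $n$ rows genuinely turns level $n-1$ into an $A$-type boundary row, so that the transverse type of $\mathcal{B}$ is $A_{n(p-1)}$ rather than something inheriting a defect from the removed fork, and that the orbit data of the $D_{np}$ cluster category restricts on the non-fork levels to the clean period-$np$ translation, with none of the parity-dependent fork-swap of Lemma~\ref{subquiver} leaking into $\mathcal{B}$. Verifying the mesh relations along level $n-1$ and the identification of the two ends of the strip is the part that requires genuine, if routine, checking.
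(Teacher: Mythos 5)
Your proposal is correct and follows essentially the same route as the paper: the paper's proof likewise takes the set $Z=\{(i,j)\mid i\in\mathbb{Z}_{np},\ j\in\{n-1,\dots,np-2\}\}$ of vertices in the top $n(p-1)$ rows, observes that the induced translation subquiver is $\tau$-stable, connected, with rows of length $np$, and concludes that it is isomorphic to the Auslander--Reiten quiver of $D^b(A_{n(p-1)})/\tau^{np}$. Your write-up simply makes explicit the routine verifications (no fork-swap on these levels, $A$-type transverse profile, period $np$, vertex count) that the paper's terse proof leaves implicit.
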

\begin{proof}
We consider the following subset $Z$ of  vertices of the quiver
 $\Gamma(D_{np},1)$:

$$Z:= \{(i,j)\mid i\in\mathbb{Z}_{np}, j\in\{n-1,\cdots ,np-2\} \}$$

Such a set $Z$ is the union of the first $n(p-1)$ orbits at the top of the quiver
$\Gamma(D_{np},1)$. It is clear that the translation quiver
generated by $Z$ (i.e. the full subquiver induced by $Z$,
together with $\tau$) is a connected component of $\Gamma(D_{np},1)$.
Each row is of length $np$. Then $Z$ is isomorphic to
the Auslander-Reiten quiver of $D^b(A_{n(p-1)})/\tau^{np}$.


%

\end{proof}

Since, by lemma~\ref{subquiver},  $\Gamma_{n,p}$ is a connected component of $\Gamma(D_{np},1)$
and the vertices of $\Gamma(D_{np},1)$
are exhausted by the vertices of $\Gamma_{n,p}$ and $Z$, we obtain the following corollary.

\begin{coro}
The  quiver $(\Gamma(D_{np},1),\tau)$
is the union of the following connected components:
\[
(\Gamma(D_{np},1),\tau_{np})=\Gamma_{n,p}\cup
 \Gamma(D^b(A_{n(p-1)})/\tau^{np}),
\]
where $\Gamma(D^b(A_{n(p-1)})/\tau^{np})$ denotes the Auslander-Reiten
quiver of  $D^b(A_{n(p-1)})/\tau^{np}$.
\end{coro}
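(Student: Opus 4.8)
The plan is to deduce the Corollary as a formal consequence of Lemma~\ref{subquiver} and Proposition~\ref{prop:A-components}: once both $\Gamma_{n,p}$ and $Z$ have been realised as connected subquivers of $\Gamma(D_{np},1)$, the only thing left to verify is that they partition its vertex set. Recall that any stable translation quiver is the disjoint union of its connected components and that, by definition, no arrows run between distinct components. Hence the whole statement amounts to showing that $\Gamma_{n,p}$ and $Z$ are connected components and that together they account for every vertex, leaving room for no third component.

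First I would fix the two pieces by their levels. By Proposition~\ref{prop:A-components}, the full subquiver $Z$ on the vertices $(i,j)$ with $j\in\{n-1,\ldots,np-2\}$ is a connected component, isomorphic to the Auslander--Reiten quiver of $D^b(A_{n(p-1)})/\tau^{np}$. Its complement inside $\Gamma(D_{np},1)$ consists of the vertices $(i,j)$ with $j\in\{0,\overline{0},1,\ldots,n-2\}$, that is, the bottom $n$ rows $\tilde{\Gamma}$; by Lemma~\ref{subquiver} this is isomorphic, as a stable translation quiver, to $\Gamma_{n,p}$, which is connected. Since the complement of a connected component is a union of the remaining components carrying no arrows to $Z$, and since $\tilde{\Gamma}\cong\Gamma_{n,p}$ is itself connected, this complement is a single connected component.

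It then remains to confirm the exhaustion by a level count. The vertex set of $\Gamma(D_{np},1)$ is $\mathbb{Z}_{np}\times\{0,\overline{0},1,\ldots,np-2\}$, so every vertex is pinned down by its column in $\mathbb{Z}_{np}$ and its level. The two level ranges $\{0,\overline{0},1,\ldots,n-2\}$ and $\{n-1,\ldots,np-2\}$ are disjoint, the first contributing $n$ rows and the second $(np-2)-(n-1)+1=n(p-1)$ rows, for a total of $n+n(p-1)=np$; this is precisely the number of rows of $\Gamma(D_{np},1)$, and both pieces carry the full column set $\mathbb{Z}_{np}$. Thus every vertex lies in $Z$ or in $\tilde{\Gamma}$, there is no third component, and $\Gamma(D_{np},1)=\Gamma_{n,p}\cup Z$ as claimed. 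The only delicate point is this final bookkeeping---checking that the prescribed ranges of levels tile $\{0,\overline{0},1,\ldots,np-2\}$ exactly once---whereas the genuinely structural input, namely that the cut between levels $n-2$ and $n-1$ separates the quiver into two components with no crossing arrows, has already been absorbed into Lemma~\ref{subquiver} and Proposition~\ref{prop:A-components}.
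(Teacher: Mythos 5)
Your proof is correct and takes essentially the same route as the paper: the paper also deduces the corollary directly from Lemma~\ref{subquiver} and Proposition~\ref{prop:A-components}, observing that $\Gamma_{n,p}$ and $Z$ are connected components whose vertices together exhaust those of $\Gamma(D_{np},1)$. Your explicit level count $n+n(p-1)=np$ simply spells out the exhaustion step that the paper states in a single sentence.
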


We ilustrate on the following example the results presented.

\begin{ejem}
Let $n=4$ and $p=2$.
\end{ejem}

\begin{figure}[H]
\begin{center}
$$
\tiny{
\xymatrix@-8mm{
06 \ar[ddr] \ar@{.}[rr] && 16 \ar[ddr] \ar@{.}[rr] && 26 \ar[ddr] \ar@{.}[rr] && 36 \ar[ddr] \ar@{.}[rr] && 46 \ar[ddr] \ar@{.}[rr] &&
56 \ar[ddr] \ar@{.}[rr] && 66 \ar[ddr] \ar@{.}[rr] && 76 \ar[ddr] \ar@{.}[rr] && 06 \ar[ddr] \\ \\
& 05 \ar[ddr] \ar[uur] \ar@{.}[rr] && 15 \ar[ddr] \ar[uur] \ar@{.}[rr] && 25 \ar[ddr] \ar[uur] \ar@{.}[rr] && 35 \ar[ddr]
\ar[uur] \ar@{.}[rr] && 45 \ar[ddr] \ar[uur] \ar@{.}[rr] && 55 \ar[ddr] \ar[uur] \ar@{.}[rr] && 65\ar[ddr] \ar[uur] \ar@{.}[rr] && 75 \ar[ddr] \ar[uur] \ar@{.}[rr] &&  05 \ar[ddr] \\ \\
&& 04 \ar[ddr] \ar[uur] \ar@{.}[rr] && 14 \ar[ddr] \ar[uur] \ar@{.}[rr] && 24 \ar[ddr] \ar[uur] \ar@{.}[rr] && 34 \ar[ddr]
\ar[uur] \ar@{.}[rr] && 44 \ar[ddr] \ar[uur] \ar@{.}[rr] && 54 \ar[ddr] \ar[uur] \ar@{.}[rr] && 64\ar[ddr] \ar[uur] \ar@{.}[rr] && 74 \ar[ddr] \ar[uur] \ar@{.}[rr] &&  04 \ar[ddr] \\ \\
&&& 03 \ar[ddr] \ar[uur] \ar@{.}[rr] && 13 \ar[ddr] \ar[uur] \ar@{.}[rr] && 23 \ar[ddr] \ar[uur] \ar@{.}[rr] && 33 \ar[ddr]
\ar[uur] \ar@{.}[rr] && 43 \ar[ddr] \ar[uur] \ar@{.}[rr] &&
53 \ar[ddr] \ar[uur] \ar@{.}[rr] && 63  \ar[ddr] \ar[uur] \ar@{.}[rr] && 73 \ar[ddr] \ar[uur] \ar@{.}[rr] && 03 \ar[ddr] \\ \\
&&&& 02 \ar[ddr] \ar[uur] \ar@{.}[rr] && 12 \ar[ddr] \ar[uur] \ar@{.}[rr] && 22 \ar[ddr] \ar[uur] \ar@{.}[rr] && 32 \ar[ddr]
\ar[uur] \ar@{.}[rr] && 42 \ar[ddr] \ar[uur] \ar@{.}[rr] &&
52 \ar[ddr] \ar[uur] \ar@{.}[rr] && 62 \ar[ddr] \ar[uur] \ar@{.}[rr] && 72 \ar[ddr] \ar[uur] \ar@{.}[rr] && 02 \ar[ddr] \\ \\
&&&&& 01 \ar[ddr] \ar[ddddr] \ar[uur] \ar@{.}[rr] && 11 \ar[ddr] \ar[ddddr] \ar[uur] \ar@{.}[rr] && 21 \ar[ddr] \ar[ddddr] \ar[uur] \ar@{.}[rr] && 31 \ar[ddr] \ar[ddddr] \ar[uur] \ar@{.}[rr] && 41 \ar[ddr] \ar[ddddr] \ar[uur] \ar@{.}[rr] &&
51 \ar[ddr] \ar[ddddr] \ar[uur] \ar@{.}[rr] && 61 \ar[ddr] \ar[ddddr] \ar[uur] \ar@{.}[rr] &&  71 \ar[ddr] \ar[ddddr] \ar[uur] \ar@{.}[rr] && 01 \ar[ddr] \ar[ddr] \ar[ddddr] \\ \\
&&&&&& 00 \ar[uur] \ar@{.}[rr] && 10 \ar[uur] \ar@{.}[rr] && 20 \ar[uur] \ar@{.}[rr] && 30 \ar[uur] \ar@{.}[rr] && 40 \ar[uur] \ar@{.}[rr] &&
50 \ar[uur] \ar@{.}[rr] && 60 \ar[uur] \ar@{.}[rr] && 70 \ar[uur] \ar@{.}[rr]&& 00 \\ \\
&&&&&& 0\overline{0} \ar[uuuur] \ar@{.}[rr] && 1\overline{0} \ar[uuuur] \ar@{.}[rr] && 2\overline{0} \ar[uuuur] \ar@{.}[rr] && 3\overline{0} \ar[uuuur] \ar@{.}[rr] && 4\overline{0} \ar[uuuur] \ar@{.}[rr] &&
5\overline{0} \ar[uuuur] \ar@{.}[rr] && 6\overline{0} \ar[uuuur] \ar@{.}[rr] && 7\overline{0} \ar[uuuur] \ar@{.}[rr]&& 0\overline{0}
}}
$$
\caption{The translation quiver $\Gamma(D_8,1)$}
\label{fi:d7usual}
\end{center}
\end{figure}
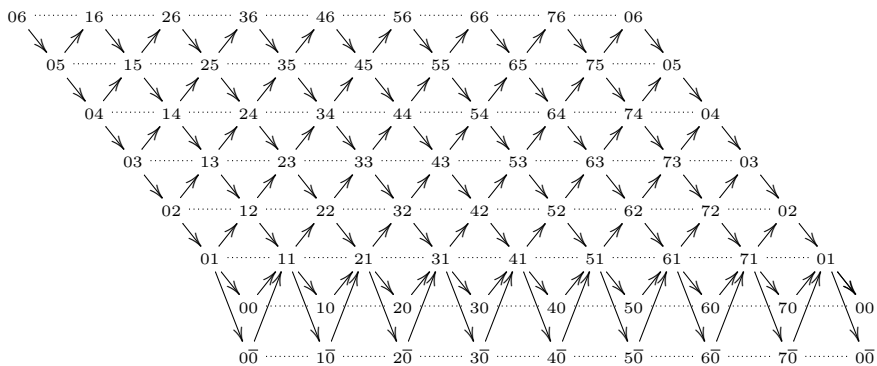

\begin{figure}[H]
\begin{center}
$$
\tiny{
\xymatrix@-8mm{
&&&& 02 \ar[ddr]  \ar@{.}[rr] && 12 \ar[ddr]  \ar@{.}[rr] && 22 \ar[ddr] \ar@{.}[rr] && 32 \ar[ddr]
 \ar@{.}[rr] && 42 \ar[ddr]  \ar@{.}[rr] &&
52 \ar[ddr]  \ar@{.}[rr] && 62 \ar[ddr]  \ar@{.}[rr] && 72 \ar[ddr]  \ar@{.}[rr] && 02 \ar[ddr] \\ \\
&&&&& 01 \ar[ddr] \ar[ddddr] \ar[uur] \ar@{.}[rr] && 11 \ar[ddr] \ar[ddddr] \ar[uur] \ar@{.}[rr] && 21 \ar[ddr] \ar[ddddr] \ar[uur] \ar@{.}[rr] && 31 \ar[ddr] \ar[ddddr] \ar[uur] \ar@{.}[rr] && 41 \ar[ddr] \ar[ddddr] \ar[uur] \ar@{.}[rr] &&
51 \ar[ddr] \ar[ddddr] \ar[uur] \ar@{.}[rr] && 61 \ar[ddr] \ar[ddddr] \ar[uur] \ar@{.}[rr] &&  71 \ar[ddr] \ar[ddddr] \ar[uur] \ar@{.}[rr] && 01 \ar[ddr] \ar[ddr] \ar[ddddr] \\ \\
&&&&&& 00 \ar[uur] \ar@{.}[rr] && 10 \ar[uur] \ar@{.}[rr] && 20 \ar[uur] \ar@{.}[rr] && 30 \ar[uur] \ar@{.}[rr] && 40 \ar[uur] \ar@{.}[rr] &&
50 \ar[uur] \ar@{.}[rr] && 60 \ar[uur] \ar@{.}[rr] && 70 \ar[uur] \ar@{.}[rr]&& 00 \\ \\
&&&&&& 0\overline{0} \ar[uuuur] \ar@{.}[rr] && 1\overline{0} \ar[uuuur] \ar@{.}[rr] && 2\overline{0} \ar[uuuur] \ar@{.}[rr] && 3\overline{0} \ar[uuuur] \ar@{.}[rr] && 4\overline{0} \ar[uuuur] \ar@{.}[rr] &&
5\overline{0} \ar[uuuur] \ar@{.}[rr] && 6\overline{0} \ar[uuuur] \ar@{.}[rr] && 7\overline{0} \ar[uuuur] \ar@{.}[rr]&& 0\overline{0}
}}
$$
\caption{The connected component $\tilde{\Gamma}_8$ isomorphic to  $\Gamma_{4,2}$}
\end{center}
\end{figure}
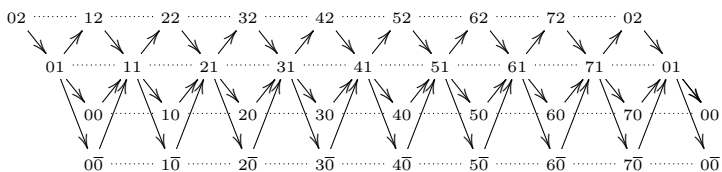

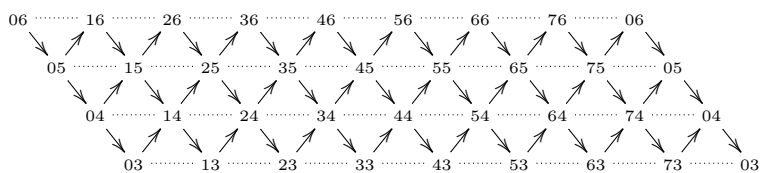
\begin{figure}[H]
\begin{center}
$$
\tiny{
\xymatrix@-8mm{
06 \ar[ddr] \ar@{.}[rr] && 16 \ar[ddr] \ar@{.}[rr] && 26 \ar[ddr] \ar@{.}[rr] && 36 \ar[ddr] \ar@{.}[rr] && 46 \ar[ddr] \ar@{.}[rr] &&
56 \ar[ddr] \ar@{.}[rr] && 66 \ar[ddr] \ar@{.}[rr] && 76 \ar[ddr] \ar@{.}[rr] && 06 \ar[ddr] \\ \\
& 05 \ar[ddr] \ar[uur] \ar@{.}[rr] && 15 \ar[ddr] \ar[uur] \ar@{.}[rr] && 25 \ar[ddr] \ar[uur] \ar@{.}[rr] && 35 \ar[ddr]
\ar[uur] \ar@{.}[rr] && 45 \ar[ddr] \ar[uur] \ar@{.}[rr] && 55 \ar[ddr] \ar[uur] \ar@{.}[rr] && 65\ar[ddr] \ar[uur] \ar@{.}[rr] && 75 \ar[ddr] \ar[uur] \ar@{.}[rr] &&  05 \ar[ddr] \\ \\
&& 04 \ar[ddr] \ar[uur] \ar@{.}[rr] && 14 \ar[ddr] \ar[uur] \ar@{.}[rr] && 24 \ar[ddr] \ar[uur] \ar@{.}[rr] && 34 \ar[ddr]
\ar[uur] \ar@{.}[rr] && 44 \ar[ddr] \ar[uur] \ar@{.}[rr] && 54 \ar[ddr] \ar[uur] \ar@{.}[rr] && 64\ar[ddr] \ar[uur] \ar@{.}[rr] && 74 \ar[ddr] \ar[uur] \ar@{.}[rr] &&  04 \ar[ddr] \\ \\
&&& 03  \ar[uur] \ar@{.}[rr] && 13  \ar[uur] \ar@{.}[rr] && 23  \ar[uur] \ar@{.}[rr] && 33
\ar[uur] \ar@{.}[rr] && 43 \ar[uur] \ar@{.}[rr] &&
53  \ar[uur] \ar@{.}[rr] && 63   \ar[uur] \ar@{.}[rr] && 73  \ar[uur] \ar@{.}[rr] && 03
}}
$$
\caption{The connected component isomorphic to $\Gamma(D^b(A_{4})/\tau^{8})$}
\label{fi:d7usual}
\end{center}
\end{figure}



Observe that the  inclusion of the AR-quiver of $\mathcal{C}_{n,p}$ in the AR-quiver of the cluster category
$\mathcal{C}_{t}$ for $t=np$, given in lemma \ref{subquiver}, does not give rise to an
inclusion at the level of full subcategories.  In  particular  $\mathcal{C}_{n,p}$  is not a triangulated subcategory of $\mathcal{C}_{np}$. However, it is possible to prove
that it is triangulated equivalent to a quotient category of $\mathcal{C}_{np}$ in the sense of J\o{}rgensen \cite{Jorgensen}.

We now recall the definition of quotient categories and some  properties.
Let $\mathcal{C}$ be an additive category and $\X$ a class of objects of
$\mathcal{C}$. Then the \emph{quotient category} $\mathcal{C}_{\X}$ has by definition the
same objects as $\mathcal{C}$, but the morphism spaces are taken modulo all the
morphisms factoring through an object of $\X$.

If $\mathcal{C}$ is a triangulated category, then $\mathcal{C}_{\X}$
 is not triangulated for all choices of $\X$. However,  $\mathcal{C}_{\X}$ is
always pre-triangulated (\cite[Theorem 2.2]{Jorgensen}) and taking a particular choice of the
class $\X$, $\mathcal{C}_{\X}$ becomes a triangulated category (\cite[ Theorem 3.3 ]{Jorgensen}).

The following proposition  follows from a similar reasoning to that of the proof
of \cite[ Proposition 4.2 ]{LambertiRep} taking
$\X$ by the additive full subcategory generated by the indecomposable objects in the first $n(p-1)$ orbits at the top of the quiver $\Gamma(D_{np},1)$ and using the results of J\o{}rgensen in \cite{Jorgensen} for type $D$ instead of those for type $A$.

\begin{prop}\label{quot}
The repetitive cluster category $\mathcal{C}_{n,p}$ is triangulated equivalent to a quotient of the
cluster category  $\mathcal{C}_t$ for $t=np$.
\end{prop}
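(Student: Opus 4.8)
The plan is to exhibit $\mathcal{C}_{n,p}$ as the J\o{}rgensen quotient of $\mathcal{C}_{t}$, $t=np$, by the subcategory isolated in Proposition~\ref{prop:A-components}, adapting the argument of \cite[Proposition~4.2]{LambertiRep} from type $A$ to type $D$. Set $\X=\operatorname{add}\{(i,j)\mid i\in\mathbb{Z}_{np},\ j\in\{n-1,\ldots,np-2\}\}$, the additive full subcategory of $\mathcal{C}_{t}$ generated by the indecomposables in the top $n(p-1)$ orbits $Z$ of $\Gamma(D_{np},1)$. The first task is to verify that $\X$ satisfies the hypotheses of \cite[Theorem~3.3]{Jorgensen}, so that the quotient $\mathcal{C}_{t}/\X$---morphisms taken modulo those factoring through $\X$---is genuinely triangulated and not merely pre-triangulated \cite[Theorem~2.2]{Jorgensen}. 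Functorial finiteness is automatic: $\mathcal{C}_{t}$ is a cluster category of Dynkin type and hence has only finitely many indecomposables, so every additive full subcategory is functorially finite. For stability under the Serre functor, recall that in $\mathcal{C}_{t}$ one has $[1]\cong\tau$, whence $\nu=\tau[1]\cong\tau^{2}$; since the levels $\{n-1,\ldots,np-2\}$ defining $Z$ avoid the fork $\{0,\overline{0}\}$, the translation acts there by $(i,j)\mapsto(i-1,j)$, so $Z$ is a union of complete $\tau$-orbits and is therefore stable under $\tau$, hence under $\nu$. This produces the triangulated quotient $\mathcal{C}_{t}/\X$.

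It remains to construct a triangle equivalence $\mathcal{C}_{t}/\X\simeq\mathcal{C}_{n,p}$. On objects this is immediate: the indecomposables of $\X$ become zero in the quotient, so $\ind(\mathcal{C}_{t}/\X)$ is identified with the complementary component $\Gamma_{n,p}$, which by Lemma~\ref{subquiver} is isomorphic as a stable translation quiver to the AR-quiver of $\mathcal{C}_{n,p}$. The substance lies in comparing morphism spaces, and I would carry this out at the level of mesh categories, since $\ind\mathcal{C}_{t}$ and $\ind\mathcal{C}_{n,p}$ are the mesh categories of their respective AR-quivers. A morphism between two objects of $\Gamma_{n,p}$ is a combination of paths in $\Gamma(D_{np},1)$, and passing to $\mathcal{C}_{t}/\X$ annihilates exactly those paths that visit a vertex of $Z$; the survivors are precisely the paths supported on the levels $\{0,\overline{0},1,\ldots,n-2\}$ of $\tilde{\Gamma}$. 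The key local point is the behaviour of the mesh relations: at a boundary vertex $x$ of level $n-2$ the relation $m_{x}$ in $\Gamma(D_{np},1)$ differs from the corresponding relation in $\tilde{\Gamma}$ only by the summand routed through its level-$(n-1)$ neighbour, and that summand factors through $\X$ and so vanishes in the quotient. Hence the mesh ideal of $\Gamma(D_{np},1)$ descends to the mesh ideal of $\tilde{\Gamma}$, yielding an equivalence of $\K$-categories $\mathcal{C}_{t}/\X\simeq\mesh(\tilde{\Gamma})\simeq\mesh(\Gamma_{n,p})=\ind\mathcal{C}_{n,p}$.

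Finally I would promote this additive equivalence to an exact one. The shift on $\mathcal{C}_{t}/\X$ is induced from that of $\mathcal{C}_{t}$ and, under the identification of Lemma~\ref{subquiver}, is compatible with the translation $\tau_{n,p}$; combined with the fractionally Calabi--Yau relation $\nu^{p}\cong[2p]$ recorded after \cite[Proposition~3.3]{Zhu}, this forces it to agree with the shift of $\mathcal{C}_{n,p}$. Because both categories are standard---their morphisms being governed by the mesh category of the AR-quiver---their distinguished triangles are generated by the AR-triangles, which are read off from the isomorphic translation-quiver data, so matching these shows the equivalence is a triangle functor. I expect the main difficulty to be not the formal quotient construction but precisely this last compatibility: one must confirm that the triangulation supplied by \cite{Jorgensen} on $\mathcal{C}_{t}/\X$ coincides on $\Gamma_{n,p}$ with the one furnished by Zhu, and check that the level-swap occurring when $np$ is odd---the only place where the fork genuinely intervenes---does not disturb $\X$, which holds because $Z$ never meets the levels $\{0,\overline{0}\}$.
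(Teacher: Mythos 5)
Your proposal takes essentially the same route as the paper: the paper's proof consists precisely of taking $\X$ to be the additive subcategory generated by the indecomposables in the top $n(p-1)$ $\tau$-orbits of $\Gamma(D_{np},1)$ and invoking J\o{}rgensen's Theorems 2.2 and 3.3 together with the reasoning of Lamberti's Proposition 4.2, adapted from type $A$ to type $D$. You simply fill in the details the paper delegates to those references (functorial finiteness and $\tau$-, shift- and Serre-stability of $\X$, the mesh-category comparison killing the summands of the mesh relations routed through level $n-1$, and the compatibility of the two triangulated structures), so the proposal is a correct elaboration of the paper's own argument.
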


\section{Geometric model of $\mathcal{C}_{n,p}$ }

In this section we present the geometric model for $\mathcal{C}_{n,p}$. It is a simple modification of the model constructed by Schiffler in \cite{S} for the cluster category of type $D_n$. We follow the notation of \cite{S}.

\subsection{Tagged edges} \label{edges}

Let $n\ge 3$ and $p\ge 1$. Consider a regular polygon $\textsf{P}_{np}$ with $np$ vertices and one puncture in its
center. We label the vertices of $\textsf{P}_{np}$ counterclockwise $1,2,\cdots, np$.

If $a, b$ are any two vertices on the boundary, let $\delta_{a,b}$ denote
the path in the counterclockwise
direction from 	$a$ to $b$ along the boundary of  $\textsf{P}_{np}$.
If $a=b$ the path runs around the polygon exactly once.  Let $|\delta_{a,b}|$
be the number of vertices on
the path $\delta_{a,b}$ (including $a$ and $b$).

\begin{figure}[H]
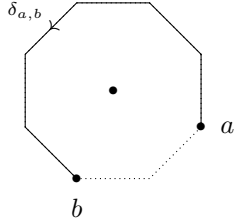

\begin{center}

\[
\xy/r3pc/:{\xypolygon8"A"{~<{}~>{.}{}}}
*+{{\scriptstyle \bullet}},
\POS"A6"\drop{\scriptstyle\bullet}
\POS"A6"\drop{\begin{array}{c}\\ \\ b\end{array}}
\POS"A8"\drop{\scriptstyle\bullet}
\POS"A8"\drop{\qquad a}
\POS"A8" \ar@{-}   "A1"
\POS"A1" \ar@{-}   "A2"
\POS"A2" \ar@{-}   "A3"
\POS"A3" \ar|-{\SelectTips{cm}{}\object@{>}}@{-}_{\zd_{a,b}}   "A4"
\POS"A4" \ar@{-}   "A5"
\POS"A5" \ar@{-}   "A6"
\endxy   \]
\caption{The path $\delta_{a,b}$ on the punctured polygon $\textsf{P}_{np}$.}
\label{Pn}
\end{center}
\end{figure}

An {\em edge} $\alpha$ from $a$ to $b$ is a  path
lying in the interior of the punctured polygon $\textsf{P}_{np}$,
with no self crossings and homotopic to $\delta_{a,b}$ for $|\delta_{a,b}| \geq np-n+3$.

We say that two edges are equivalent if they start in the same vertex, end in the same
vertex and they are homotopic. We will denote $M_{a,b}$  the
equivalence class of edges homotopic to $\delta_{a,b}$.

A \emph{tagged edge} $M_{a,b}^\ze$  is an edge $ M_{a,b}$ with a tag $\ze=\pm 1$ in such a way that if
$a\ne b$, then $\ze=1$.
As in \cite{S}, denote by $\edges $ the set of {\em tagged edges}.

If $a=b$, there are exactly two tagged edges $M_{a,a}^{-1}$ and $M_{a,a}^{1}$ and if $a\ne b$ there is
exactly one tagged edge $M_{a,b}^1$.
If the tag is $1$ and  $a\ne b$, we will often drop the exponent and write
$M_{a,b}$ instead of $M_{a,b}^1$.

A simple count shows that there are $ pn^2$
elements in $\edges$. These tagged edges will correspond to the
indecomposable objects in the repetitive cluster category.

\subsection{Elementary moves}\label{sect elementary moves}
Adapting a concept from \cite{S}, we will  define elementary
moves, which will correspond to irreducible morphisms in the repetitive cluster
  category.

An elementary move  sends a tagged edge  $M_{a,b}^\ze\in \edges$ to
  another tagged edge $M_{a',b'}^{\ze'}\in\edges$ in the following way:

\begin{enumerate}
\item If $| \zd_{a,b}|  =np-n+3$, then there is precisely one elementary
  move    $M_{a,b}\mapsto M_{a,b+1}$.

\item If $ np-n+4 \le | \zd_{a,b}| \le np-1$, then
there are precisely two elementary
  moves  $M_{a,b}\mapsto M_{a+1,b}$ and $M_{a,b}\mapsto M_{a,b+1}$.

\item If $| \zd_{a,b}|  =np$, then
there are precisely three elementary
  moves  $M_{a,b}\mapsto M_{a+1,b}$, $M_{a,b}\mapsto M_{a,a}^1$ and
  $M_{a,b}\mapsto  M_{a,a}^{-1}$.

\item If $| \zd_{a,b}|    =np+1$, then $a=b$ and there is precisely one elementary move
  $M_{a,a}^\ze\mapsto M_{a+1,a}$.
\end{enumerate}

\begin{nota} Observe that when we write $M_{a,b}$ the indices $a,b$ have to be taken modulo $np$.
\end{nota}

\begin{figure}[H]
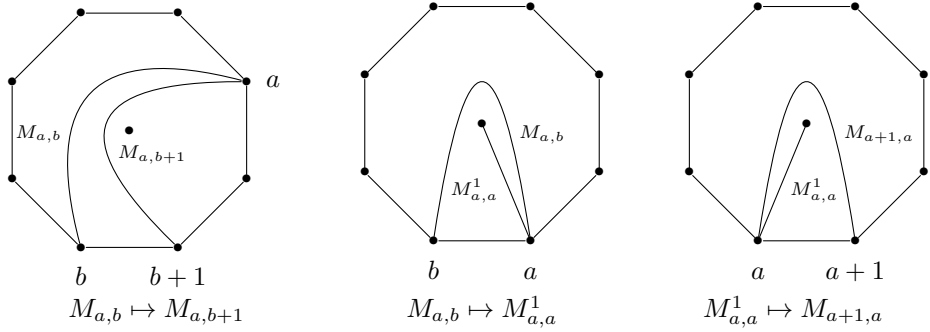

\[\begin{array}{ccc}
\xy/r4pc/:{\xypolygon8"A"{~<{}~>{-}{\scriptstyle \bullet}}}
*+{\scriptstyle\bullet},
\POS"A1"\drop{\qquad a}
\POS"A7"\drop{\begin{array}{c}\\ \\ b+1 \end{array}  }
\POS"A6"\drop{\begin{array}{c}\\ \\b \end{array}  }
\POS"A6" \ar@{}^(.3){M_{a,b}}@/^9ex/ "A1"
\POS"A7" \ar@{}_(.3){M_{a,b+1}}@/^10ex/ "A1",
\endxy
\quad
&\quad
\xy/r4pc/:{\xypolygon8"A"{~<{}~>{-}{\scriptstyle \bullet}}}
*+{\scriptstyle\bullet},
\POS"A7"\drop{\begin{array}{c}\\ \\a \end{array}  }
\POS"A6"\drop{\begin{array}{c}\\ \\b \end{array}  }
\POS"A6" \ar@{}^(.8){M_{a,b}}@/^14ex/ "A7"
\POS"A7" \ar@{}^(.5){M_{a,a}^1} @/^0ex/ "A0",
\endxy
\quad
&\quad
\xy/r4pc/:{\xypolygon8"A"{~<{}~>{-}{\scriptstyle \bullet}}}
*+{\scriptstyle\bullet},
\POS"A6"\drop{\begin{array}{c}\\ \\ a \end{array}  }
\POS"A7"\drop{\begin{array}{c}\\ \\ a+1 \end{array}  }
\POS"A6" \ar@{}^(.8){M_{a+1,a}}@/^14ex/ "A7"
\POS"A6" \ar@{}_(.5){M_{a,a}^1} @/^0ex/ "A0",
\endxy \\
M_{a,b}\mapsto M_{a,b+1}
&M_{a,b}\mapsto M_{a,a}^1
&M_{a,a}^1\mapsto M_{a+1,a}
\end{array}\]
\caption{Examples of elementary moves.}\label{fig elementary moves}
\end{figure}

\subsection{Translation} \label{sect tau}
We define the {\em translation} $\tau$ to be the following  bijection
$\tau:\edges\to\edges$:

\begin{enumerate}
\item If $a\ne b$ then $\tau  M_{a,b}=M_{a-1,b-1}$.
\item If $a= b$ then  $\tau  M_{a,a}^\ze=M_{a-1,a-1}^{-\ze}$, for
  $\ze=\pm 1$.
\end{enumerate}

\vspace*{.5cm}

Follows directly from the definition of $\tau$ that for  $a\ne b$,
$\tau^{np} \,M_{a,b}^\ze = M_{a,b}^\ze$ regardless the parity of $np$. For $a=b$, if $np$ is even, $\tau^{np} \,M_{a,a}^\ze = M_{a,a}^\ze$ and if $np$ is odd, $\tau^{np}$ switches the tags, that is;  $\tau^{np} \,M_{a,a}^\ze = M_{a,a}^{-\ze}$.

Observing that our tagged edges are a subset of the set of tagged edges defined in \cite{S}. Precisely, the ones with  $| \zd_{a,b}|   \ge np-n+3$ instead of $|\zd_{a,b}|\ge 3$. For $p=1$ we have exactly the definition of \cite{S}.
In particular, we have the following lemma.

\begin{lema}\cite[Lemma 3.6]{S}\label{mov elementales}
Let $M_{a,b}^\zl,\ M_{c,d}^{\ze}$ be two tagged edges. Then there is an elementary move
$M_{a,b}^{\zl} \mapsto M_{c,d}^{\ze} $ if and only if there is an elementary
move
$\tau  M_{c,d}^{\ze} \mapsto M_{a,b}^{\zl}$.
\end{lema}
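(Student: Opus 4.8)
The plan is to prove the equivalence by a direct case analysis based on the length $|\zd_{a,b}|$, exploiting the symmetry between the definition of elementary moves and the definition of $\tau$. Since this lemma is quoted from \cite{S} and our tagged edges form a subset of Schiffler's (those with $|\zd_{a,b}|\ge np-n+3$), the key observation is that all four types of elementary moves listed in Section~\ref{sect elementary moves} stay within our admissible range, so the argument reduces to checking that applying $\tau$ to the target of a move produces an edge from which there is a move back to the source. First I would record the two basic compatibilities that drive everything: translation shifts both indices down by one, $\tau M_{a,b}=M_{a-1,b-1}$ when $a\ne b$ (and flips the tag when $a=b$), while it preserves the length $|\zd_{a,b}|$ except in the degenerate closed case. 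This means the \emph{type} of an edge (which of the four length-classes it falls in) is preserved by $\tau$, which is what allows the moves to match up.

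The core of the proof is then to verify the biconditional move-by-move. I would organize it by the length of the source edge $M_{a,b}^{\zl}$. For the generic ranges in cases (1) and (2), where $a\ne b$, an elementary move of the form $M_{a,b}\mapsto M_{a,b+1}$ has target with the same first index; applying $\tau$ gives $\tau M_{a,b+1}=M_{a-1,b}$, and one checks directly that there is an elementary move $M_{a-1,b}\mapsto M_{a,b}=\tau M_{a,b}$ shifting the first index up—precisely a move of the ``$M_{c,d}\mapsto M_{c+1,d}$'' shape whose length lies in the correct interval. Symmetrically, a move $M_{a,b}\mapsto M_{a+1,b}$ is matched by $\tau M_{a+1,b}=M_{a,b-1}\mapsto M_{a,b}$. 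The converse direction is obtained by running the same identities backward, using that $\tau$ is a bijection.

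The delicate part, and the step I expect to be the main obstacle, is the interaction with the tags in the closed cases (3) and (4), where $a=b$. Here $\tau$ flips the tag, $\tau M_{a,a}^{\ze}=M_{a-1,a-1}^{-\ze}$, so one must check that the tag bookkeeping is consistent: for instance the move $M_{a,b}\mapsto M_{a,a}^{1}$ in case (3) must be matched against $\tau M_{a,a}^{1}=M_{a-1,a-1}^{-1}$, and one needs an elementary move out of this closed edge back to $\tau M_{a,b}=M_{a-1,b-1}$, which is the case~(4) move $M_{a-1,a-1}^{-1}\mapsto M_{a,a-1}$; here one must verify that the resulting indices and length ($np+1$) are exactly right and that both choices of tag $\ze=\pm1$ on the closed edge are accounted for. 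The move $M_{a,b}\mapsto M_{a,a}^{-1}$ is handled in parallel with the opposite tag.

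Finally I would note that, because these are precisely the cases already treated in \cite[Lemma 3.6]{S} and our edges are a subcollection closed under both elementary moves and $\tau$ (all four move-types produce edges still satisfying $|\zd|\ge np-n+3$, as is visible from the length bounds in the four cases), no new phenomena arise from the restriction, and the lemma follows from Schiffler's argument applied verbatim to our subset. The entire verification is a finite check of the four length-classes together with their $\tau$-images, and the only genuine subtlety is the sign bookkeeping of the tags in the two closed cases.
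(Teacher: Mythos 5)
Your proposal is correct. It is worth noting that the paper offers no proof of this lemma at all: the statement is quoted as \cite[Lemma 3.6]{S}, and the paper's entire justification is the sentence preceding it, namely that the tagged edges of $\mathcal{C}(\textsf{P}_{np})$ are exactly Schiffler's tagged edges on the punctured $np$-gon satisfying $|\delta_{a,b}|\ge np-n+3$ (all of them when $p=1$). Your closing paragraph --- the subset is closed under the four types of elementary moves and under $\tau$, so Schiffler's lemma restricts verbatim --- is precisely this argument made explicit; the closure observation is exactly what legitimizes quoting the lemma for the restricted class of edges, and the paper leaves it implicit. Your additional move-by-move verification is a correct, self-contained alternative that the paper does not carry out: the matchings $M_{a,b}\mapsto M_{a,b+1}$ against $M_{a-1,b}\mapsto M_{a,b}$, and $M_{a,b}\mapsto M_{a+1,b}$ against $M_{a,b-1}\mapsto M_{a,b}$, are right, and the length bookkeeping (a move $M_{a,b}\mapsto M_{a,b+1}$ forces $|\delta_{a,b}|\le np-1$, so $|\delta_{a-1,b}|$ lands in an admissible case, etc.) confirms that each matching move exists; likewise the tag-flipping pairing between cases (3) and (4), where $\tau M_{a,a}^{\varepsilon}=M_{a-1,a-1}^{-\varepsilon}$ admits the case-(4) move to $M_{a,a-1}$ for either tag, is handled correctly. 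One minor slip: in the first matching you wrote ``an elementary move $M_{a-1,b}\mapsto M_{a,b}=\tau M_{a,b}$''; the required target is $M_{a,b}$ itself, not $\tau M_{a,b}=M_{a-1,b-1}$ --- evidently a typo, since the move you exhibit is the correct one.
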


\subsection{Quiver of tagged edges  of $\textsf{P}_{np}$}
As next we associate a translation quiver $\Gamma_{\odot}$
to the tagged edges of $\textsf{P}_{np}$ with the intention of modelling the AR-quiver of the
category $\mathcal{C}_{n,p}$.

\begin{defi} Let $\Gamma_{\odot}$ be the quiver whose vertices are the tagged edges $M\in\edges$  on the punctured polygon  $\textsf{P}_{np}$. Given $M,N\in\edges$ there is an arrow $M\to
 N$ in $\Gamma_{\odot}$ whenever there is an elementary move
$M\mapsto N$.

\end{defi}

Note that $\Gamma_{\odot} $ has no loops and no  multiple arrows.\\

\begin{lema}
The pair $(\Gamma_{\odot}, \tau)$ is a stable translation quiver.
\end{lema}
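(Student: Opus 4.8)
The plan is to verify directly the three requirements in the definition of a stable translation quiver from Section \ref{mesh cat and mesh relations}: that $\Gamma_{\odot}$ is locally finite, that $\tau$ is a bijection defined on all vertices, and that the arrow-counting condition relating $\tau$ to incidences holds. Since it has already been noted that $\Gamma_{\odot}$ has no multiple arrows, every arrow-count is $0$ or $1$, so the translation condition reduces to the biconditional: there is an arrow $M\to N$ if and only if there is an arrow $\tau N\to M$.

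First I would check that $\tau$ is a bijection of $\edges$. By its definition in Section \ref{sect tau}, $\tau$ sends $M_{a,b}\mapsto M_{a-1,b-1}$ when $a\ne b$ and $M_{a,a}^{\epsilon}\mapsto M_{a-1,a-1}^{-\epsilon}$. The quantity $|\delta_{a,b}|$ counting vertices on the counterclockwise boundary path is invariant under simultaneously decreasing both endpoints by one, so $\tau$ preserves the defining constraint $|\delta_{a,b}|\ge np-n+3$ and hence maps $\edges$ into $\edges$. An explicit inverse is given by $M_{a,b}\mapsto M_{a+1,b+1}$ and $M_{a,a}^{\epsilon}\mapsto M_{a+1,a+1}^{-\epsilon}$, so $\tau$ is bijective. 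As $\tau$ is defined on every tagged edge, the subset $\Gamma_0'$ of the definition coincides with all of $(\Gamma_{\odot})_0$, which is what stability requires.

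Next, local finiteness follows from inspecting the elementary moves in Section \ref{sect elementary moves}: every tagged edge admits at most three outgoing elementary moves, so each vertex has out-degree at most three. For the in-degree I would invoke Lemma \ref{mov elementales}: an arrow $M\to N$ exists exactly when there is an elementary move $\tau N\mapsto M$, and since $\tau N$ has out-degree at most three, $N$ has in-degree at most three. Hence $\Gamma_{\odot}$ is locally finite. The translation condition itself is then essentially a restatement of Lemma \ref{mov elementales}: by the definition of $\Gamma_{\odot}$ there is an arrow $M\to N$ precisely when there is an elementary move $M\mapsto N$, and by the lemma this holds if and only if there is an elementary move $\tau N\mapsto M$, i.e. an arrow $\tau N\to M$. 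In the absence of multiple arrows this equates the number of arrows $M\to N$ with the number of arrows $\tau N\to M$ for all $M,N$.

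I do not expect a genuine obstacle, since the essential combinatorial content is already packaged in the cited Lemma \ref{mov elementales}. The one point requiring care is confirming that $\tau$ stays inside the restricted set $\edges$ (those with $|\delta_{a,b}|\ge np-n+3$ rather than the full Schiffler set $|\delta_{a,b}|\ge 3$), but this is immediate from the invariance of $|\delta_{a,b}|$ under $\tau$ noted above.
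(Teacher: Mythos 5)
Your proof is correct and follows essentially the same route as the paper's: bijectivity of $\tau$, finiteness (hence local finiteness) of $\Gamma_{\odot}$, and the reduction of the arrow-counting condition to Lemma \ref{mov elementales} using the absence of multiple arrows. The only difference is that you spell out details the paper dismisses as clear --- that $\tau$ preserves the constraint $|\delta_{a,b}|\ge np-n+3$ and so maps $\edges$ to itself, with an explicit inverse --- which is a worthwhile check but not a different argument.
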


\begin{proof}
  Clearly the map $\tau$ is bijective.
  As $\Gamma_{\odot}$ is
finite, we only need to persuade us that the number of arrows from a tagged edge  $M$ to a tagged edge
$N$ is equal to the number of arrows from $\tau N$ to $M$. As there is at most one
arrow between any two tagged edges, we only have to check that there is an arrow from
$M$ to $N$ if and only if there is an arrow from $\tau N$ to $M$. It follows directly from Lemma \ref{mov elementales}.
\end{proof}

\subsection{The category of tagged edges $\mathcal{C}(\textsf{P}_{np})$}\label{cat of tagged edges}

We will now define a $\K$-linear additive category of tagged edges $\mathcal{C}(\textsf{P}_{np})$ as
the mesh category $\mesh (\Gamma_{\odot},\tau)$ of $(\Gamma_{\odot}, \tau)$ (as in section \ref{mesh cat and mesh relations}). More specifically, the objects are direct sums of tagged edges in $\edges$. The set of morphisms from a tagged edge $Y$ to a tagged edge $X$ is the quotient of the vector space over $\K$
spanned by sequences of elementary moves from $Y$ to $X$ by the
subspace generated by the  mesh relations

\[ m_X=\sum_{Y\stackrel{\za}{\rightarrow} X} \tau
X\stackrel{\zs(\za)}{\rightarrow} Y \stackrel{\za}{\rightarrow} X.\]

\section{Equivalence of categories}\label{equivalencia de cat}
In this section, we will prove the equivalence between the category $\mathcal{C}(\textsf{P}_{np})$
and the repetitive cluster category $\mathcal{C}_{n,p}$.

\begin{teo} \label{isomorfismo de quivers}
The quiver $(\Gamma_{\odot}, \tau)$ is a translation quiver isomorphic
to the Auslander-Reiten quiver $\Gamma_{n,p}$ of  ${\mathcal C}_{n,p}$.
\end{teo}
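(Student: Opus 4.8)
The plan is to construct an explicit bijection between the vertex sets of the two translation quivers and then verify that it preserves arrows and commutes with the translation maps; since both quivers have at most one arrow between any two vertices and both carry mesh structures determined by their translation, a vertex bijection respecting arrows and $\tau$ automatically yields an isomorphism of stable translation quivers. First I would recall that by Lemma \ref{subquiver} the AR-quiver $\Gamma_{n,p}$ has vertex set $\mathbb{Z}_{np}\times\{0,\overline{0},1,\ldots,n-2\}$, with explicit arrows $(i,j)\to(i,k)$ and $(i,k)\to(i+1,k)$ coming from arrows $j\to k$ in $Q^{op}$, and the translation $\tau_{n,p}$ described in Section 2. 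On the geometric side, the vertices of $\Gamma_{\odot}$ are the $pn^2$ tagged edges $M_{a,b}^{\varepsilon}$ with $|\delta_{a,b}|\ge np-n+3$, the arrows are the elementary moves of Section \ref{sect elementary moves}, and $\tau$ is the bijection of Section \ref{sect tau}.

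The core of the proof is to define the map $\Phi:\edges\to(\Gamma_{n,p})_0$ on vertices. I would stratify the tagged edges by the quantity $|\delta_{a,b}|$, which ranges over $np-n+3,\ldots,np,np+1$, and send an edge to a level according to this value: the edges with the smallest $|\delta_{a,b}|=np-n+3$ map to the top level $n-2$, those with $|\delta_{a,b}|=np$ (the ``diameters'' touching the puncture) together with the tagged loops $M_{a,a}^{\pm1}$ map to the two bottom levels $0,\overline{0}$, and intermediate values fill the levels in between, with the first index $a$ (taken mod $np$) determining the $\mathbb{Z}_{np}$-coordinate. This is exactly the dictionary Schiffler uses in \cite{S} for $p=1$, restricted to our subset of edges; the elementary moves in items (1)--(4) of Section \ref{sect elementary moves} were in fact designed so that moves $M_{a,b}\mapsto M_{a,b+1}$ and $M_{a,b}\mapsto M_{a+1,b}$ match the two arrow types $(i,j)\to(i,k)$ and $(i,k)\to(i+1,k)$, while the two moves to $M_{a,a}^{\pm1}$ in item (3) reproduce the fork at level $1$ splitting into levels $0$ and $\overline{0}$.

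After fixing $\Phi$ I would check three things in turn: that $\Phi$ is a bijection (a counting check, since both sides have $pn^2$ vertices and the stratification by $|\delta_{a,b}|$ gives exactly the right number of edges at each level), that $\Phi$ carries each elementary move to the corresponding arrow of $\Gamma_{n,p}$ and vice versa (a case analysis over the four types of elementary moves, comparing with the two arrow rules), and that $\Phi\circ\tau=\tau_{n,p}\circ\Phi$ (using the formula $\tau M_{a,b}=M_{a-1,b-1}$ together with the tag-switching rule $\tau M_{a,a}^{\varepsilon}=M_{a-1,a-1}^{-\varepsilon}$). The main obstacle is the $\tau$-compatibility at the bottom levels $\{0,\overline{0}\}$: the translation in $\Gamma_{n,p}$ swaps $0$ and $\overline{0}$ precisely when $np$ is odd, and on the geometric side $\tau^{np}$ switches the tags of the loops $M_{a,a}^{\varepsilon}$ exactly when $np$ is odd (as recorded just before Lemma \ref{mov elementales}). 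I would verify that these two parity conditions coincide under $\Phi$, so that the tag $\varepsilon=\pm1$ corresponds consistently to the choice of level in $\{0,\overline{0}\}$; this is the one point where the geometry of the puncture and the parity of $np$ genuinely interact, and care is needed to align the conventions (including $\overline{\overline{0}}=0$) so that the loop tagging matches the $0\leftrightarrow\overline{0}$ swap. Once these three verifications are complete, Lemma \ref{mov elementales} guarantees the arrow condition is symmetric under $\tau$, and the equality of mesh structures gives the desired isomorphism of translation quivers.
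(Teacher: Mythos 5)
Your proposal is correct and takes essentially the same approach as the paper: an explicit vertex dictionary matching the level $j$ with the value of $|\delta_{a,b}|$ and the $\mathbb{Z}_{np}$-coordinate with the endpoint $a$, with the loop tags assigned alternately in $a$ so that $\tau$-equivariance holds and the two parity conditions for $np$ odd coincide --- this is precisely the paper's closed-form map $\phi_p$, whose tag on $M_{i+1,i+1}^{\pm 1}$ alternates with the parity of $i$. The only slip is the clause placing the $|\delta_{a,b}|=np$ edges together with the loops at levels $0,\overline{0}$: those edges sit at level $1$ and only the loops $M_{a,a}^{\pm 1}$ fill the two bottom levels, as your own description of the fork at level $1$ (and your counting check) in fact requires.
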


\begin{proof}
Consider the morphism  $\phi_p: \Gamma_{n,p}\rightarrow \Gamma_{\odot}$ such that


\[
\phi_p(i,j)=\left\{\begin{array}{lll}
M_{i+1,i+np+1-j}, & \text{if  $j\notin\{0,\overline{0}\}$; }\\
M_{i+1,i+1}, & \text{if  $j=0$ and
$i$ is even; }\\
M_{i+1,i+1}, & \text{if  $j=\overline{0}$ and
$i$ is odd; }\\
M_{i+1,i+1}^{-1}, & \text{otherwise.}
\end{array}\right.
\]

It is clear that $\phi_p$ is a bijection between the vertices of both quivers that sends the $\tau$-orbit of the vertex $(0,j)$ to the $\tau$-orbit of the vertex $M_{1,np+1-j}$; the $\tau$-orbit of the vertex $(0,0)$ to the $\tau$-orbit of the vertex $M_{1,1}$; and  the $\tau$-orbit of the vertex $(0,\overline{0})$ to the $\tau$-orbit of the vertex $M_{1,1}^{-1}$. Moreover, the arrows $(i,j)\rightarrow (i',j')$ agrees with the elementary moves $\phi_p(i,j)\mapsto \phi_p(i',j')$.

\end{proof}

Since $\mathcal{C}(\textsf{P}_{np})$ is
the mesh category $\mesh (\Gamma_{\odot},\tau)$ of $(\Gamma_{\odot}, \tau)$ we obtain the following corollary.

\begin{coro}\label{main result}
The repetitive cluster category of type $D_n$ is equivalent to the category of tagged edges $\mathcal{C}(\textsf{P}_{np})$.
\end{coro}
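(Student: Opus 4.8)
The plan is to deduce the corollary by composing a short chain of $\K$-linear equivalences, using the isomorphism of translation quivers supplied by Theorem \ref{isomorfismo de quivers} to pass from one mesh category to the other. The point is that both $\mathcal{C}_{n,p}$ and $\mathcal{C}(\textsf{P}_{np})$ are governed by their underlying translation quivers, and these quivers have just been identified.

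First I would establish the analogue for the repetitive cluster category of part (2) of Proposition \ref{prop 1}, namely that $\mathcal{C}_{n,p}$ is equivalent, as a $\K$-linear category, to the mesh category of its own Auslander--Reiten quiver: $\mathcal{C}_{n,p}\simeq\mesh(\Gamma_{n,p},\tau_{n,p})$. Since $\mathcal{C}_{n,p}=\mathcal{D}/\langle\tau^{-p}[p]\rangle$ and $\ind\mathcal{D}$ is by Proposition \ref{prop 1} equivalent to $\mesh(\ZZ Q^{op})$, the key observation is that $\tau^{-p}[p]$ acts on $\ZZ Q^{op}$ as an automorphism of translation quivers that respects the mesh relations; hence the orbit construction descends to mesh categories, and the quotient is the mesh category of the quotient translation quiver, which is exactly $\Gamma_{n,p}$ as described in Section 3. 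Next, Theorem \ref{isomorfismo de quivers} gives that $\phi_p$ is an isomorphism of stable translation quivers $(\Gamma_{n,p},\tau_{n,p})\to(\Gamma_{\odot},\tau)$; since any such isomorphism carries mesh relations to mesh relations, it induces an equivalence $\mesh(\Gamma_{n,p},\tau_{n,p})\simeq\mesh(\Gamma_{\odot},\tau)$. Finally, $\mathcal{C}(\textsf{P}_{np})$ is by definition (Section \ref{cat of tagged edges}) the mesh category $\mesh(\Gamma_{\odot},\tau)$.

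Composing the three equivalences then yields
\[
\mathcal{C}_{n,p}\;\simeq\;\mesh(\Gamma_{n,p},\tau_{n,p})\;\simeq\;\mesh(\Gamma_{\odot},\tau)\;=\;\mathcal{C}(\textsf{P}_{np}),
\]
which is precisely the assertion of the corollary. The two outer identifications are, respectively, the standardness statement of the first step and the definition of $\mathcal{C}(\textsf{P}_{np})$; the middle equivalence is the functorial transport of mesh structure along $\phi_p$.

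The hard part will be the first step: verifying that $\mathcal{C}_{n,p}$ is \emph{standard}, i.e.\ that all its morphism spaces are already accounted for by the mesh relations of $\Gamma_{n,p}$, rather than acquiring extra morphisms from the orbit construction. The subtlety is that the Hom-spaces $\bigoplus_{i\in\ZZ}\mathrm{Hom}_{\mathcal{D}}(M,(\tau^{-p}[p])^iN)$ are defined as infinite sums over the orbit, so one must confirm they remain finite-dimensional and are spanned by compositions of irreducible maps modulo the mesh relations. For the Dynkin cluster categories this standardness is classical, and here it follows because $D_n$ is of finite representation type: the fundamental-domain description recalled in Section 3, with $\ind(\mathcal{C}_{n,p})=\bigcup_{i=1}^{p}\ind(\mathcal{F}_i)$, guarantees the required finiteness, while the orbit functor $\pi_p\colon\mathcal{D}\to\mathcal{C}_{n,p}$ is a covering and therefore preserves the mesh presentation.
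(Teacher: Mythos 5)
Your proposal follows essentially the same route as the paper: the corollary is deduced by combining the translation-quiver isomorphism of Theorem \ref{isomorfismo de quivers} with the fact that $\mathcal{C}(\textsf{P}_{np})$ is by definition the mesh category of $(\Gamma_{\odot},\tau)$. The only difference is that you make explicit the standardness step --- that $\mathcal{C}_{n,p}$ is $\K$-linearly equivalent to the mesh category of its own Auslander--Reiten quiver $\Gamma_{n,p}$, with the finiteness and covering arguments this requires --- whereas the paper uses this fact implicitly without comment.
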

\qed

Observe that the fundamental domain $\mathcal{F_1}$ of the cluster category $\mathcal{C}_n$ is in correspondence  (via $\phi_p$ ) with the tagged edges $M_{a,b}^\ze$ for $a\in \{1,\cdots, n\}$; and in general $\mathcal{F}_k$, the $F^k$-shift of $F$,  is in correspondence  (via $\phi_p$ ) with the tagged edges $M_{a,b}^\ze$ for $a\in \{(k-1)n+1,\cdots, kn\}$. Then the action of $F$ on $\mathcal{C}_{n,p}$ can be see as a counterclockwise  rotation $\rho$ through $\frac{2\pi}{p}$ around the center of $\textsf{P}_{np}$.

Given a tagged edge $M_{a,b}^\ze\in \textsf{P}_{np}$, we can  identify the vertices $a+1$ and   $a+1+n(p-1)$ and delete all the edges between. This gives us a new tagged edge  $\mu_p(M_{a,b}^\ze)\in \textsf{P}_{n}$. The indices $a,b$ have to be taken modulo $np$ on the punctured polygon $\textsf{P}_{np}$ and modulo $n$ on the punctured polygon $\textsf{P}_{n}$.

\begin{figure}[H]
\[\begin{array}{cccc}
\quad
\xy/r4pc/:{\xypolygon8"A"{~<{}~>{-}{\scriptstyle \bullet}}}
*+{\scriptstyle\bullet},
\POS"A7"\drop{\begin{array}{c}\\ \\1 \end{array}  }
\POS"A6"\drop{\begin{array}{c}\\ \\8 \end{array}  }
\POS"A2"\drop{\begin{array}{c}4\\ \\ \end{array}  }
\POS"A3"\drop{\begin{array}{c}5 \\ \\ \end{array}  }
\POS"A2" \ar@{}^(.9){M_{4,5}}@/^14ex/ "A3"
\POS"A6" \ar@{}^(.8){M_{1,8}}@/^14ex/ "A7",
\endxy
\quad& \stackrel{\mu_2}{\longmapsto}
&\quad
\xy/r4pc/:{\xypolygon4"A"{~<{}~>{-}{\scriptstyle \bullet}}}
*+{\scriptstyle\bullet},
\POS"A4"\drop{\begin{array}{c}\\ \\ 1 \end{array}  }
\POS"A3"\drop{\begin{array}{c}\\ \\ 4 \end{array}  }
\POS"A3" \ar@{}_(.5){M_{1,4}}@/^14ex/ "A4",
\endxy
\end{array}\]
\caption{Example of $\mu_2:\textsf{P}_{8} \rightarrow \textsf{P}_{4}$.}\label{mu_p}
\end{figure}

It follows that this projection $\mu_p: \textsf{P}_{np} \rightarrow \textsf{P}_{n}$  corresponds to
the projection functor $\eta_p: {\mathcal C}_{n,p}\twoheadrightarrow {\mathcal C}_{n}$.

Moreover we have the following commutative diagram

\[   \xymatrix{   {\mathcal C}_{n,p}  \ar[r]^{\eta_p} \ar[d]_{\phi_p}  &  {\mathcal C}_{n} \ar[d]^{\phi_1} \\
                    \mathcal{C}(\textsf{P}_{np}) \ar[r]_{\mu_p}  &   \mathcal{C}(\textsf{P}_{n})     } \]

\begin{ejem}

We ilustrate the  Auslander-Reiten quiver of the category $\mathcal{C}(\textsf{P}_{np})$ for  $p=2$ and $n=3$. The
translation $\tau$ is indicated by dotted lines (it is directed to the left).

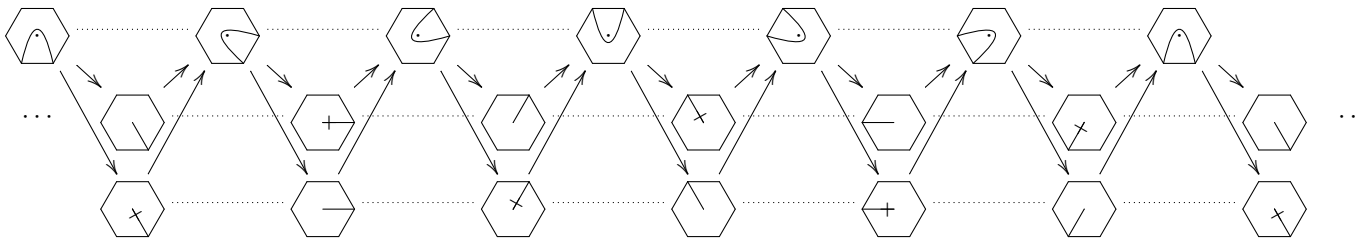
\begin{figure}[H]
\[
\xymatrix@R=6pt@C=6pt{
\xy/r1pc/:{\xypolygon6"A"{~<{}~>{-}{}}}
*+{\cdot},
\POS"A5" \ar@{-}@/^{3ex}/   "A6"
\endxy
\ar[rd]\ar[rdd]\ar@{.}[rr]
&&
\xy/r1pc/:{\xypolygon6"A"{~<{}~>{-}{}}}
*+{\cdot},
\POS"A6" \ar@{-}@/^{3ex}/    "A1"
\endxy
\ar[rd]\ar[rdd]\ar@{.}[rr]
&&
\xy/r1pc/:{\xypolygon6"A"{~<{}~>{-}{}}}
*+{\cdot},
\POS"A1" \ar@{-}@/^{3ex}/   "A2"
\endxy
\ar[rd]\ar[rdd]\ar@{.}[rr]
&&\xy/r1pc/:{\xypolygon6"A"{~<{}~>{-}{}}}
*+{\cdot},
\POS"A2" \ar@{-}@/^{3ex}/   "A3"
\endxy
\ar[rd]\ar[rdd]\ar@{.}[rr]
&&\xy/r1pc/:{\xypolygon6"A"{~<{}~>{-}{}}}
*+{\cdot},
\POS"A3" \ar@{-}@/^{3ex}/   "A4"
\endxy
\ar[rd]\ar[rdd]\ar@{.}[rr]
&&\xy/r1pc/:{\xypolygon6"A"{~<{}~>{-}{}}}
*+{\cdot},
\POS"A4" \ar@{-}@/^{3ex}/   "A5"
\endxy
\ar[rd]\ar[rdd]\ar@{.}[rr]
&&\xy/r1pc/:{\xypolygon6"A"{~<{}~>{-}{}}}
*+{\cdot},
\POS"A5" \ar@{-}@/^{3ex}/   "A6"
\endxy
\ar[rd]\ar[rdd] &
\\
\cdots &\xy/r1pc/:{\xypolygon6"A"{~<{}~>{-}{}}}
\POS"A6" \ar@{-}   "A0"
\endxy
\ar[ru]\ar@{.}[rr]&&
\xy/r1pc/:{\xypolygon6"A"{~<{}~>{-}{}}}
\POS"A1" \ar|-(0.8){\SelectTips{cm}{}\object@{+}}@{-}    "A0"
\endxy
\ar[ru] \ar@{.}[rr]&&
\xy/r1pc/:{\xypolygon6"A"{~<{}~>{-}{}}}
\POS"A2" \ar@{-}   "A0"
\endxy
\ar[ru]\ar@{.}[rr]&&
\xy/r1pc/:{\xypolygon6"A"{~<{}~>{-}{}}}
\POS"A3" \ar|-(0.8){\SelectTips{cm}{}\object@{+}}@{-}   "A0"
\endxy
\ar[ru]\ar@{.}[rr]
&&\xy/r1pc/:{\xypolygon6"A"{~<{}~>{-}{}}}
\POS"A4" \ar@{-}   "A0"
\endxy
\ar[ru]\ar@{.}[rr]
&&\xy/r1pc/:{\xypolygon6"A"{~<{}~>{-}{}}}
\POS"A5" \ar|-(0.8){\SelectTips{cm}{}\object@{+}}@{-}    "A0"
\endxy
\ar[ru]\ar@{.}[rr]
&&\xy/r1pc/:{\xypolygon6"A"{~<{}~>{-}{}}}
\POS"A6" \ar@{-}   "A0"
\endxy &\cdots
\\
&\xy/r1pc/:{\xypolygon6"A"{~<{}~>{-}{}}}
\POS"A6" \ar|-(0.8){\SelectTips{cm}{}\object@{+}}@{-} "A0"
\endxy
\ar[ruu]\ar@{.}[rr]&&
\xy/r1pc/:{\xypolygon6"A"{~<{}~>{-}{}}}
\POS"A1" \ar@{-}   "A0"
\endxy
\ar[ruu]\ar@{.}[rr]&&
\xy/r1pc/:{\xypolygon6"A"{~<{}~>{-}{}}}
\POS"A2" \ar|-(0.8){\SelectTips{cm}{}\object@{+}}@{-}  "A0"
\endxy
\ar[ruu]\ar@{.}[rr]&&
\xy/r1pc/:{\xypolygon6"A"{~<{}~>{-}{}}}
\POS"A3"  \ar@{-}  "A0"
\endxy
\ar[ruu]\ar@{.}[rr]
&&\xy/r1pc/:{\xypolygon6"A"{~<{}~>{-}{}}}
\POS"A4" \ar|-(0.8){\SelectTips{cm}{}\object@{+}}@{-}    "A0"
\endxy
\ar[ruu]\ar@{.}[rr]
&&\xy/r1pc/:{\xypolygon6"A"{~<{}~>{-}{}}}
\POS"A5" \ar@{-}   "A0"
\endxy
\ar[ruu]\ar@{.}[rr]
&&\xy/r1pc/:{\xypolygon6"A"{~<{}~>{-}{}}}
\POS"A6" \ar|-(0.8){\SelectTips{cm}{}\object@{+}}@{-}    "A0"
\endxy &
}
\]
\caption{The quiver $\Gamma_{\odot}$ for $p=2$ and $n=3$.}\label{gamma_np}
\end{figure}

\end{ejem}

\section{Cluster tilting theory for ${\mathcal C}_{n,p}$}

In this final section we are interested in understanding the cluster tilting objects of
$\mathcal{C}_{n,p}$, and compare them with configurations of tagged edges in the pounctured polygon
$\textsf{P}_{np}$. When $p=1$, it is known that cluster tilting objects of $\mathcal{C}_{n}$
correspond to  {\em triangulations} of a regular  polygon with $n$ vertices and one puncture; i.e. a maximal
collection of pairwise non crossing tagged edges.

Cluster tilting objects in $\mathcal{C}_{n,p}$ have  been studied from an
algebraic point of view in \cite{Zhu}.

In the following definition let $\mathrm{add}(T)$ be the full subcategory consisting
of direct summands of direct sums of finitely many copies of $T$.

\begin{defi}\cite{Zhu}
\label{deftilting} An object $T \in \mathcal{C}_{n,p}$ is called a {\em cluster
tilting object} if for any object $X\in\mathcal{C}_{n,p}$  we have that
\begin{enumerate}
  \item $\mathrm{Ext}_{\mathcal{C}_{n,p}}^1(T,X)=0$ if and only if $X\in\mathrm{add}(T)$;
  \item $\mathrm{Ext}_{\mathcal{C}_{n,p}}^1(X,T)=0$ if and only if $X\in\mathrm{add}(T)$.
\end{enumerate}

\end{defi}

A cluster tilting object is called basic if all its direct summands are
pairwise non isomorphic. In this paper all the cluster tilting objects we consider
will be basic, so we omit the term basic.


If $T=T_1\oplus T_2 \oplus \cdots\oplus T_k$ is an object in $ \mathcal{C}_{n,p}$ denote by $\mathcal{X}_T$ the set of tagged edges on the punctured polygon $\textsf{P}_{np}$ via the isomorphism  $\phi_p$

\[ T=T_1\oplus T_2 \oplus \cdots\oplus T_k \stackrel{\phi_p}{\longmapsto} \mathcal{X}_T=\{\phi_p(T_1),\cdots, \phi_p(T_k)\} \]

\vspace*{.5cm}

 Taking $p=1$ we have the next result that  follows from \cite{S}.

 \begin{lema}\label{tiltinC}
 $T$ is a cluster tilting object
in $\mathcal{C}_n$ if and only if $\mathcal{X}_T$ is a triangulation of  the regular punctured
polygon $\textsf{P}_{n}$. The cardinality of $\mathcal{X}_T$ is $n$.
\end{lema}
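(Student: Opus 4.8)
The plan is to read the statement as the specialization $p=1$ of the machinery already built and then to deduce it \emph{directly} from Schiffler's classification in \cite{S}, rather than to reprove that classification. When $p=1$ the punctured polygon $\textsf{P}_{np}$ is the once-punctured $n$-gon $\textsf{P}_n$, and the defining inequality $|\zd_{a,b}|\ge np-n+3$ collapses to $|\zd_{a,b}|\ge 3$. Consequently the set $\edges$ of tagged edges, the elementary moves, the translation $\tau$ and the mesh category $\mathcal{C}(\textsf{P}_n)=\mesh(\Gamma_\odot,\tau)$ are exactly those Schiffler attaches to $\textsf{P}_n$; no new combinatorial input appears, so the content of the lemma is Schiffler's theorem written in the present notation.

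First I would fix the identification of categories. Applying Corollary \ref{main result} with $p=1$, the map $\phi_1$ of Theorem \ref{isomorfismo de quivers} induces an equivalence $\mathcal{C}_n\simeq\mathcal{C}(\textsf{P}_n)$ whose effect on objects is the bijection $\ind\mathcal{C}_n\to\edges$. Under it a basic object $T=T_1\oplus\cdots\oplus T_k$ is sent to the collection $\mathcal{X}_T=\{\phi_1(T_1),\dots,\phi_1(T_k)\}$ of distinct tagged edges, with the indecomposable summands of $T$ corresponding exactly to the elements of $\mathcal{X}_T$. This converts Definition \ref{deftilting} into a statement purely about $\mathcal{X}_T$.

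Next I would invoke the geometric dictionary of \cite{S}: for indecomposables $M,N\in\mathcal{C}_n$ one has $\mathrm{Ext}^1_{\mathcal{C}_n}(M,N)\neq 0$ precisely when the corresponding tagged edges cross in $\textsf{P}_n$, crossing taken in the tagged sense so that $M_{a,a}^{1}$ and $M_{a,a}^{-1}$ do not cross. Since $\mathcal{C}_n$ is $2$-Calabi-Yau (the case $p=1$ of the dimension-$\frac{2p}{p}$ property), $\mathrm{Ext}^1$ is symmetric, so conditions (1) and (2) of Definition \ref{deftilting} agree and each asserts that $\mathcal{X}_T$ is a maximal pairwise non-crossing collection of tagged edges, i.e.\ a triangulation of $\textsf{P}_n$. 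This is exactly the claimed equivalence.

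For the cardinality, every triangulation of the once-punctured $n$-gon has exactly $n$ tagged arcs; equivalently a cluster tilting object of the cluster category of type $D_n$ has $n$ indecomposable summands, the rank of $D_n$. I would take this from the corresponding count in \cite{S}. The one delicate point is the bookkeeping at the puncture—ensuring that the pair $M_{a,a}^{\pm 1}$ is counted correctly and treated as mutually non-crossing—which is precisely what Schiffler's tagged formalism is designed to record; this is the step I would check against \cite{S} most carefully, though it adds nothing beyond the citation.
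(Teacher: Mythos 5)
Your proof is correct and takes essentially the same route as the paper: the paper states this lemma without proof, as an immediate consequence of Schiffler's results in \cite{S} specialized to $p=1$, which is precisely the reduction you carry out. The details you supply (the $\mathrm{Ext}^1$--crossing dictionary, the $2$-Calabi--Yau symmetry identifying conditions (1) and (2) of Definition \ref{deftilting}, and the count of $n$ tagged arcs per triangulation) simply make explicit what the paper leaves to the citation.
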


Now we are going to state a similar result for cluster tilting objects in $\mathcal{C}_{n,p}$.

If $\mathcal{X}_{T'}$ is a set of tagged edges $M_{a,b}^\ze$,  with $a,b\in \{1,\cdots, n\}$,   of $\textsf{P}_{n}$; by abuse of notation we  also denote by $\mathcal{X}_{T'}$ the set of tagged edges $M_{a,b}^\ze$ with $a,b\in \{1,\cdots, n\}$ of $\textsf{P}_{np}$. Recall that $\rho$ is the counterclockwise  rotation  through $\frac{2\pi}{p}$ around the center of $\textsf{P}_{np}$ ( which corresponds with the action of $F$ on $\mathcal{C}_{n,p}$). Then we have:

\begin{prop}\label{triangulacion}
$T$ is a cluster tilting object of $\mathcal{C}_{n,p}$ if and only if  there is a cluster tilting object $T'$ of $\mathcal{C}_{n}$ such that

$$\mathcal{X}_T=\mathcal{X}_{T'}\cup \rho(\mathcal{X}_{T'})\cup \cdots \cup \rho^{p-1}(\mathcal{X}_{T'})$$
\end{prop}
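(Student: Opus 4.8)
The plan is to reduce everything to the projection functor $\eta_p\colon \mathcal{C}_{n,p}\to\mathcal{C}_n$ and its geometric counterpart $\mu_p$, by means of the orbit decomposition of morphism spaces. The first thing I would establish is the key identity relating $\mathrm{Ext}$ in the two categories: since $(\tau^{-p}[p])^i=F^{pi}$ with $F=\tau^{-1}[1]$, splitting the summation index $j=pi+r$ in the definition of $\mathrm{Hom}_{\mathcal{C}_n}$ gives, for all objects $M,N$,
\[ \mathrm{Ext}^1_{\mathcal{C}_n}(\eta_p M,\eta_p N)\cong\bigoplus_{r=0}^{p-1}\mathrm{Ext}^1_{\mathcal{C}_{n,p}}(M,F^r N). \]
Here $F=\rho$ is the rotation, a triangle auto-equivalence of $\mathcal{C}_{n,p}$ of order $p$ (because $F^p=\tau^{-p}[p]\cong\mathrm{id}$ in the orbit category), and $\eta_p$ restricts to a bijection between $\mathrm{ind}(\mathcal{F}_1)$ and $\mathrm{ind}(\mathcal{C}_n)$. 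I would also record that $\mathrm{ind}(\mathcal{C}_{n,p})$ is the union of the $p$ rotations $\rho^s(\mathcal{F}_1)$, $s=0,\dots,p-1$, so that every indecomposable $X$ is written uniquely as $X=\rho^s\widehat{X}_0$ with $\widehat{X}_0\in\mathcal{F}_1$; write $\overline{X_0}:=\eta_p(\widehat{X}_0)\in\mathcal{C}_n$.

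For the direction ($\Leftarrow$), let $T'$ be cluster tilting in $\mathcal{C}_n$, with canonical lift $\widehat{T'}\in\mathcal{F}_1$, and set $T=\bigoplus_{r=0}^{p-1}\rho^r\widehat{T'}$, so that $\mathcal{X}_T=\bigcup_r\rho^r\mathcal{X}_{T'}$. For an arbitrary indecomposable $X=\rho^s\widehat{X}_0$ I would compute, using that $\rho$ is an auto-equivalence and that $\rho^{-r}X$ runs over all $\rho$-shifts of $\widehat{X}_0$ as $r$ runs modulo $p$,
\[ \mathrm{Ext}^1_{\mathcal{C}_{n,p}}(T,X)\cong\bigoplus_{r=0}^{p-1}\mathrm{Ext}^1_{\mathcal{C}_{n,p}}(\widehat{T'},\rho^{-r}X)\cong\bigoplus_{m=0}^{p-1}\mathrm{Ext}^1_{\mathcal{C}_{n,p}}(\widehat{T'},\rho^m\widehat{X}_0)\cong\mathrm{Ext}^1_{\mathcal{C}_n}(T',\overline{X_0}), \]
and symmetrically $\mathrm{Ext}^1_{\mathcal{C}_{n,p}}(X,T)\cong\mathrm{Ext}^1_{\mathcal{C}_n}(\overline{X_0},T')$. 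Since $T'$ is cluster tilting, both sides vanish if and only if $\overline{X_0}\in\mathrm{add}(T')$, which in turn holds if and only if $X=\rho^s\widehat{X}_0\in\mathrm{add}(T)$. This verifies both conditions of Definition \ref{deftilting}, so $T$ is cluster tilting.

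For the direction ($\Rightarrow$), the cleanest route I see avoids checking $\rho$-invariance by hand. The assignment $T'\mapsto\bigoplus_r\rho^r\widehat{T'}$ constructed in ($\Leftarrow$) is injective, since applying $\eta_p$ recovers $T'$; hence it embeds the set of cluster tilting objects of $\mathcal{C}_n$ into that of $\mathcal{C}_{n,p}$. As both categories are Krull--Schmidt with finitely many indecomposables, both sets are finite, and by Zhu's one-to-one correspondence \cite{Zhu} they have equal cardinality. Therefore the embedding is a bijection, and every cluster tilting object of $\mathcal{C}_{n,p}$ is of the stated form. I would then translate into the geometric model: via the commutative square relating $\eta_p,\mu_p,\phi_p,\phi_1$, the functor $\eta_p$ corresponds to the folding $\mu_p\colon\textsf{P}_{np}\to\textsf{P}_n$ whose fibre over $M_{a,b}^{\varepsilon}$ is exactly $\{\rho^r M_{a,b}^{\varepsilon}\}_{r=0}^{p-1}$, while by Lemma \ref{tiltinC} the cluster tilting objects $T'$ of $\mathcal{C}_n$ are precisely the triangulations $\mathcal{X}_{T'}$ of $\textsf{P}_n$; this produces the displayed description of $\mathcal{X}_T$.

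The step I expect to be the main obstacle is the surjectivity in ($\Rightarrow$), namely ruling out cluster tilting objects of $\mathcal{C}_{n,p}$ that are not $\rho$-invariant. I sidestep it above through a counting argument resting on Zhu's bijection. A self-contained alternative would be to prove directly that $\rho T$ is $\mathrm{Ext}$-orthogonal to $T$—equivalently, via the key identity, that $\eta_p(T)$ is rigid in $\mathcal{C}_n$—and then to invoke maximality of rigid objects together with the fact that cluster tilting objects in both categories have the same number of summands ($n$ versus $np$); however, making the rigidity of $\eta_p(T)$ unconditional appears to require exactly the input supplied by Zhu's theorem, or alternatively by the quotient description of Proposition \ref{quot} together with Jørgensen's results.
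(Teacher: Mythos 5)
Your proposal is correct, but it distributes the work differently from the paper, so a comparison is worthwhile. The paper's own proof is essentially a double citation of \cite[Theorem 3.5]{Zhu} plus the geometric dictionary: for ($\Rightarrow$) it quotes Zhu to get that $T'=\eta_p(T)$ is cluster tilting in $\mathcal{C}_n$ and that $T$ is the full preimage of $T'$, so that $\mathcal{X}_T=\phi_p(T)=\mu_p^{-1}\phi_1\eta_p(T)=\mathcal{X}_{T'}\cup\rho(\mathcal{X}_{T'})\cup\cdots\cup\rho^{p-1}(\mathcal{X}_{T'})$; for ($\Leftarrow$) it quotes the same theorem to conclude that $T'\oplus F(T')\oplus\cdots\oplus F^{p-1}(T')$ is cluster tilting in $\mathcal{C}_{n,p}$. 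You do two things differently. First, you reprove the ($\Leftarrow$) direction from first principles, using the orbit-category decomposition
\[
\mathrm{Ext}^1_{\mathcal{C}_n}(\eta_p M,\eta_p N)\cong\bigoplus_{r=0}^{p-1}\mathrm{Ext}^1_{\mathcal{C}_{n,p}}(M,F^r N),
\]
together with $\rho^p\cong\mathrm{id}$, the fact that $\eta_p$ is a triangle functor, and the partition of $\mathrm{ind}(\mathcal{C}_{n,p})$ into the $p$ rotations of $\mathcal{F}_1$; this is exactly the mechanism inside Zhu's proof, so your argument opens the black box that the paper leaves closed, and your verification of both conditions of Definition \ref{deftilting} on indecomposables (which suffices by Krull--Schmidt) is sound. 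Second, for ($\Rightarrow$) you replace the structural citation by an injectivity-plus-cardinality argument: the assignment $T'\mapsto\bigoplus_r\rho^r\widehat{T'}$ is injective (apply $\eta_p$), both sets of cluster tilting objects are finite (finitely many indecomposables in each category), and Zhu's one-to-one correspondence forces equal cardinality, hence surjectivity. This is logically valid, but—as you candidly note—it does not remove the dependence on \cite[Theorem 3.5]{Zhu}; it merely downgrades what is used from the structural description to the bare bijection. The net trade-off: your version is more self-contained on the rigidity/orthogonality side and isolates precisely where external input enters, at the cost of length; the paper's version is shorter because Zhu's theorem carries both implications, and its real content is the translation through $\phi_p$, $\mu_p$ and $\rho$, which your final step reproduces identically.
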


\begin{proof}
  Suppose that $T$ is a cluster tilting object in $\mathcal{C}_{n,p}$. Then by \cite[Theorem 3.5]{Zhu} $T'=\eta_p(T)$ is a cluster tilting object in $\mathcal{C}_{n}$. By Lemma \ref{tiltinC}, $\mathcal{X}_{T'}$ is a triangulation of  the regular punctured polygon $\textsf{P}_{n}$ with $n$ elements.
  Let $\mathcal{X}:=\mu_p^{-1}(\mathcal{X}_{T'})$  the corresponding  set of tagged edges  in $\textsf{P}_{n,p}$.
   Then $\mathcal{X}_{T}:=\phi_p(T)=\mu_p^{-1}\phi_1\eta_p(T)=\mathcal{X}$ and
   $\mathcal{X}=\mu_p^{-1}(\mathcal{X}_{T'})=\mathcal{X}_{T'}\cup \rho(\mathcal{X}_{T'})\cup \cdots \cup \rho^{p-1}(\mathcal{X}_{T'})$ by definition of $\mu_p$.

   On the other hand we assume that $\mathcal{X}_T=\mathcal{X}_{T'}\cup \rho(\mathcal{X}_{T'})\cup \cdots \cup \rho^{p-1}(\mathcal{X}_{T'})$ with $T'$ a cluster tilting object in $\mathcal{C}_n$. Then $T=\phi_p^{-1}(\mathcal{X}_T)=T'\oplus F(T')\oplus \cdots\oplus F^{p-1}(T')$ is a cluster tilting object in $\mathcal{C}_{n,p}$,  again by \cite[Theorem 3.5]{Zhu}.

   \end{proof}

\begin{defi}
  A set of tagged edges $\mathcal{X}$ of $\textsf{P}_{n,p}$ is said to be a \emph{$p$-triangulation} if  there is a triangulation $\mathcal{Y}$ of  $\textsf{P}_{n}$ (in the  sense of \cite{S}) such that $\mathcal{X}=\mu_p^{-1}(\mathcal{Y})$.
\end{defi}


   Then we can rewrite Proposition \ref{triangulacion} as follows:

  \begin{prop}
$T$ is a cluster tilting object of $\mathcal{C}_{n,p}$ if and only if  $\mathcal{X}_T$ is a $p$-triangulation of $\textsf{P}_{n,p}$.
  \end{prop}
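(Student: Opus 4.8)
The plan is to observe that this final proposition is essentially a restatement of Proposition \ref{triangulacion} combined with the definition of a $p$-triangulation, so the proof reduces to unwinding the two definitions and invoking the already-proved result. The key equivalence to establish is that the condition ``$\mathcal{X}_T = \mathcal{X}_{T'} \cup \rho(\mathcal{X}_{T'}) \cup \cdots \cup \rho^{p-1}(\mathcal{X}_{T'})$ for some cluster tilting object $T'$ of $\mathcal{C}_n$'' is exactly the condition ``$\mathcal{X}_T$ is a $p$-triangulation of $\textsf{P}_{np}$,'' at which point Proposition \ref{triangulacion} supplies the conclusion.

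First I would recall the definition of a $p$-triangulation: $\mathcal{X}$ is a $p$-triangulation precisely when $\mathcal{X} = \mu_p^{-1}(\mathcal{Y})$ for some triangulation $\mathcal{Y}$ of $\textsf{P}_n$. Then I would use Lemma \ref{tiltinC}, which identifies triangulations $\mathcal{Y}$ of $\textsf{P}_n$ with the sets $\mathcal{X}_{T'}$ arising from cluster tilting objects $T'$ of $\mathcal{C}_n$; thus $p$-triangulations are exactly the sets of the form $\mu_p^{-1}(\mathcal{X}_{T'})$ with $T'$ cluster tilting in $\mathcal{C}_n$. Next I would invoke the identity already computed in the proof of Proposition \ref{triangulacion}, namely that
$$\mu_p^{-1}(\mathcal{X}_{T'}) = \mathcal{X}_{T'} \cup \rho(\mathcal{X}_{T'}) \cup \cdots \cup \rho^{p-1}(\mathcal{X}_{T'}),$$
which follows directly from the definition of $\mu_p$ together with the identification of the action of $F$ on $\mathcal{C}_{n,p}$ with the rotation $\rho$ on $\textsf{P}_{np}$.

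Combining these two observations shows that $\mathcal{X}_T$ is a $p$-triangulation if and only if $\mathcal{X}_T = \mathcal{X}_{T'} \cup \rho(\mathcal{X}_{T'}) \cup \cdots \cup \rho^{p-1}(\mathcal{X}_{T'})$ for some cluster tilting object $T'$ of $\mathcal{C}_n$. Applying Proposition \ref{triangulacion}, this latter condition holds if and only if $T$ is a cluster tilting object of $\mathcal{C}_{n,p}$, which is exactly the assertion. Since the entire argument is a translation between equivalent formulations, there is no genuine analytic obstacle; the only point requiring care is to confirm that the set-theoretic equality $\mu_p^{-1}(\mathcal{X}_{T'}) = \bigcup_{k=0}^{p-1} \rho^k(\mathcal{X}_{T'})$ is interpreted consistently with the convention that indices in $\textsf{P}_{np}$ are taken modulo $np$ and those in $\textsf{P}_n$ modulo $n$. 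That compatibility is precisely what makes the preimage under $\mu_p$ decompose into the $p$ rotated copies, and it has effectively already been recorded in the proof of Proposition \ref{triangulacion}.
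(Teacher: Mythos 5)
Your proof is correct and takes essentially the same route as the paper: the paper offers no separate argument for this proposition, presenting it explicitly as a rewriting of Proposition \ref{triangulacion} in the language of $p$-triangulations, and your unwinding of the definition of $p$-triangulation via Lemma \ref{tiltinC} and the identity $\mu_p^{-1}(\mathcal{X}_{T'})=\mathcal{X}_{T'}\cup\rho(\mathcal{X}_{T'})\cup\cdots\cup\rho^{p-1}(\mathcal{X}_{T'})$ is exactly the intended justification. In effect you have written out the short verification the paper leaves implicit, with no gaps.
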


  Since the $p$-triangulations have $pn$ different tagged edges we have the following Corollary.

  \begin{coro}
    Any cluster tilting object of $\mathcal{C}_{n,p}$ has $pn$ pairwise non isomorphic summands.
  \end{coro}
  \qed

\end{document}